\newtheorem{defn}{Definition}[section]
\newtheorem{prop}[defn]{Proposition}
\newtheorem{theorem}[defn]{Theorem}
\newcommand{\E}{\mathbb{E}}
\newcommand{\Prob}{\mathbb{P}}
\DeclareRobustCommand{\pder}[1]{%
  \@ifnextchar\bgroup{\@pder{#1}}{\@pder{}{#1}}}
\newcommand{\@pder}[2]{\frac{\partial#1}{\partial#2}}
\begin{document}

\title{The nonhomogeneous frog model on $\mathbb{Z}$}

\author{Josh Rosenberg}
\date{}
\maketitle

\begin{abstract}We examine a system of interacting random walks with leftward drift on $\mathbb{Z}$, which begins with a single active particle at the origin and some distribution of inactive particles on the positive integers.  Inactive particles become activated when landed on by other particles, and all particles beginning at the same point posses equal leftward drift.  Once activated, the trajectories of distinct particles are independent.  This system belongs to a broader class of problems involving interacting random walks on rooted graphs, referred to collectively as the frog model.  Additional conditions that we impose on our model include that the number of frogs (i.e. particles) at positive integer points is a sequence of independent random variables which is increasing in terms of the standard stochastic order, and that the sequence of leftward drifts associated with frogs originating at these points is decreasing.  Our results include sharp conditions with respect to the sequence of random variables and the sequence of drifts, that determine whether the model is transient (meaning the probability infinitely many frogs return to the origin is $0$) or non-transient.  Several, more specific, versions of the model described will also be considered, and a cleaner, more simplified set of sharp conditions will be established for each case.
\end{abstract}

\section{Introduction}
The frog model is a system of interacting random walks on a rooted graph.  It begins with a single ``active" frog at the root and some distribution of sleeping frogs (either random or deterministic) at the non-root vertices.  The active frog performs a discrete-time nearest neighbor random walk on the graph (biased or unbiased) and any time an active frog lands on a vertex containing sleeping frogs, all of these frogs become active and begin performing their own discrete-time nearest neighbor random walks, activating sleeping frogs along the way.  Previous work on the frog model has included looking at the model on infinite n-ary trees as well as on Euclidean lattices.  In particular, a number of people have studied a variety of different versions of the frog model on $\mathbb{Z}$, often focusing on establishing conditions that determine whether the model is recurrent or transient with respect to the number of distinct frogs that visit the root.  This paper will focus on exploring several of these models while building on, expanding, and synthesizing some of the existing results pertaining to them.

There are three existing results, each addressing a different version of the frog model on $\mathbb{Z}$, that serve as a jumping off point for the present work.  The first concerns a model in which all nonzero vertices contain an i.i.d. number of sleeping frogs, and activated frogs perform mutually independent random walks that go left with probability $p$ (where $\frac{1}{2}<p<1$) and right with probability $1-p$.  In \cite{Gantert} Nina Gantert and Philipp Schmidt prove that if $\eta$ represents a random variable with the same distribution as the number of sleeping frogs at each nonzero vertex, then\begin{equation}\label{gantschmtresult}\Prob_{\eta}(\text{the origin is visited i.o.})=\left\{\begin{array}{ll}0 &\text{if }\E[\text{log}^+\eta]<\infty\\1 &\text{if }\E[\text{log}^+\eta]=\infty\end{array}\right.\end{equation}(note that this condition does not depend on the particular value of $p$).

The second result that served to motivate the present investigations involved a model in which negative integer vertices contain no sleeping frogs and positive integer vertices each contain a single sleeping frog.  For each $n>0$ the frog at $x=n$ (if activated) performs a random walk (independently of the other active frogs) that goes left with probability $p_n$ (with $\frac{1}{2}<p_n<1$), and right with probability $1-p_n$ (i.e. the particular drift value depends on where the frog originated).  In \cite{BMZ} Daniela Bertacchi, Fabio Prates Machado, and Fabio Zucca established (in addition to a number of other results) that if there exists some increasing sequence of positive integers $\left\{n_k\right\}_{k\in\mathbb{N}}$ such that\begin{equation}\label{bmzrelresult}\sum_{k=0}^{\infty}\prod_{i=0}^{n_k}\Big(1-\Big(\frac{1-p_i}{p_i}\Big)^{n_{k+1}-i}\Big)<\infty\end{equation}then the model is non-transient (i.e. infinitely many frogs hit the origin with positive probability).

The third and final result which this paper builds on again looks at a frog model on $\mathbb{Z}$ for which no sleeping frogs reside to the left of the origin, and activated frogs perform random walks with leftward drift.  This time the number of sleeping frogs $X_j$ at $x=j$ (for $j\geq 1$) has distribution $\text{Poiss}(\eta_j)$, where the $X_j$'s are mutually independent and $\left\{\eta_j\right\}$ is an increasing sequence.  At each step activated frogs go left with probability $p$ (for $\frac{1}{2}<p<1$) and right with probability $1-p$.  This model was presented in \cite{Rosenberg} by the author, where it was established that the model is non-transient if and only if\begin{equation}\label{myres}\sum_{j=1}^{\infty}e^{-\frac{1-p}{2p-1}\eta_j}<\infty\end{equation}

\bigskip 
\noindent{\bf Statement and discussion of results.} The first result that we'll present establishes a sharp condition distinguishing between transience and non-transience for a more general frog model on $\mathbb{Z}$ that subsumes all three of the models described above.  In this model points to the left of the origin contain no sleeping frogs and, for $j\geq 1$, the number of sleeping frogs at $x=j$ is a random variable $X_j$, where the $X_j$'s are independent, non-zero with positive probability, and where $X_{j+1}\succeq X_j$ (here $``\succeq"$ represents stochastic dominance).  In addition, for each $j\geq 1$ frogs originating at $x=j$ (if activated) go left with probability $p_j$ (where $\frac{1}{2}<p_j<1$) and right with probability $1-p_j$, where the $p_j$'s are decreasing and the random walks are all mutually independent (the frog beginning at the origin goes left with probability $p_0$, where $p_0$ also satisfies $\frac{1}{2}<p_0<1$).  This model will be referred to as the nonhomogeneous frog model on $\mathbb{Z}$, and the sharp condition we eluded to will come in the form of the following theorem.

\begin{theorem}\label{theorem:neat1}
Let $f_j$ be the probability generating function of $X_j$ for the nonhomogeneous frog model on $\mathbb{Z}$.  The model is transient if and only if\begin{equation}\label{generalform}\sum_{n=2}^{\infty}\prod_{j=1}^{n-1}f_j\Big(1-\Big(\frac{1-p_j}{p_j}\Big)^{n-j}\Big)=\infty\end{equation}
\end{theorem}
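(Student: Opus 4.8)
The plan is to recast the model as a one‑dimensional percolation problem driven by independent ``reach'' variables and then read the dichotomy off a telescoping product. For $j\ge 1$ let $R_j$ be the rightmost site ever visited by any of the $X_j$ walks issued from site $j$ (with $R_j:=j$ if $X_j=0$), and let $R_0$ be the rightmost site visited by the walk of the frog at the origin. Writing $r_j:=\tfrac{1-p_j}{p_j}\in(0,1)$, the $R_j$ are independent with $\Prob(R_0\le m)=1-r_0^{\,m+1}$ and $\Prob(R_j\le m)=f_j\bigl(1-r_j^{\,m+1-j}\bigr)$ for $m\ge j$, since by gambler's ruin a single walk from $j$ reaches $m+1$ with probability $r_j^{\,m+1-j}$. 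Because a leftward‑drifting walk is transient to $-\infty$, every activated frog visits the origin a.s.; together with a Borel--Cantelli argument using $\Prob(X_j\ge1)\ge\Prob(X_1\ge1)>0$ and independence of the $X_j$, this shows the model is non‑transient iff the activation front passes every site with positive probability. Running $\rho^{(0)}:=R_0$, $\rho^{(k+1)}:=\max\bigl(\rho^{(k)},\,\max_{1\le j\le\rho^{(k)}}R_j\bigr)$ and $\rho:=\lim_k\rho^{(k)}$, the visited positive sites are exactly $\{1,\dots,\rho\}$; so with $A_n:=\{\rho\ge n\}$ the events $A_n$ decrease, $\Prob(A_1)=r_0$, and non‑transience amounts to $\Prob(\rho=\infty)=\lim_n\Prob(A_n)>0$. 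Finally, a chain realizing $\rho\ge n$ uses only frogs from sites $0,\dots,n-1$, so $A_n$ lies in $\sigma(R_0,\dots,R_{n-1})$ and is increasing in those coordinates.

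Let $T_m:=\prod_{j=1}^{m-1}f_j\bigl(1-r_j^{\,m-j}\bigr)$ be the $m$th summand of \eqref{generalform}. The key identity: on $A_n$ all frogs at $0,\dots,n$ are activated, so site $n+1$ is visited iff $\max_{0\le j\le n}R_j\ge n+1$, whence $\{\rho=n\}=A_n\cap D_{n+1}\cap\{R_n=n\}$ where $D_{n+1}:=\{R_j\le n:0\le j\le n-1\}$ and $\{R_n\le n\}=\{R_n=n\}$ since $R_n\ge n$. As $\{R_n=n\}$ has probability $f_n(1-r_n)$ and is independent of $\sigma(R_0,\dots,R_{n-1})$, we get $\Prob(\rho=n)=f_n(1-r_n)\,\Prob(A_n\cap D_{n+1})$, and a direct computation gives $f_n(1-r_n)\,\Prob(D_{n+1})=(1-r_0^{\,n+1})\,T_{n+1}$. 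Since $A_n$ is increasing and $D_{n+1}$ decreasing in the independent coordinates $R_0,\dots,R_{n-1}$, the Harris (FKG) inequality gives $\Prob(A_n\cap D_{n+1})\le\Prob(A_n)\Prob(D_{n+1})$, so $\Prob(\rho=n)\le T_{n+1}\Prob(A_n)$ and $\Prob(A_{n+1})=\Prob(A_n)-\Prob(\rho=n)\ge\Prob(A_n)(1-T_{n+1})$. Hence $\Prob(\rho=\infty)\ge r_0\prod_{n\ge2}(1-T_n)>0$ whenever $\sum_n T_n<\infty$ (each $T_n<1$ because every $X_j$ is nonzero with positive probability), which establishes non‑transience when the series converges --- and, notably, this direction uses neither $X_{j+1}\succeq X_j$ nor the monotonicity of the $p_j$.

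For the converse it suffices to prove the matching reverse bound $\Prob(A_n\cap D_{n+1})\ge c\,\Prob(A_n)\Prob(D_{n+1})$ with $c>0$ independent of $n$: then $\Prob(A_{n+1})\le\Prob(A_n)\bigl(1-c(1-r_0)T_{n+1}\bigr)$, so $\sum_n T_n=\infty$ forces $\Prob(\rho=\infty)\le r_0\prod_n\bigl(1-c(1-r_0)T_{n+1}\bigr)=0$, i.e.\ transience. This is the step I expect to be the main obstacle. Conditioning on $D_{n+1}$ caps every reach at level $n$, which makes the $R_j$ stochastically smaller and could in principle destroy the percolation to $n$; the point to exploit is that a percolating chain never needs a reach beyond $n$, so the conditioning only discards ``wasted'' overshoots. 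Turning this into a uniform lower bound is exactly where the hypotheses enter: $X_{j+1}\succeq X_j$ together with $p_{j+1}\le p_j$ makes the reaches $R_j$ (and the relative reaches $R_j-j$) stochastically increasing in $j$, so on $A_n$ one should be able to reroute an arbitrary percolating chain so that it advances to $n$ without overshooting, at the cost of only a bounded multiplicative factor --- for instance via a coupling in which the $R_j$ violating $D_{n+1}$ are resampled below level $n$ and the monotonicity guarantees the rerouted chain still reaches $n$ with conditional probability bounded away from $0$. Combining the two directions then yields that the model is transient precisely when \eqref{generalform} holds.
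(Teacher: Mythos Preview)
Your argument for the direction ``$\sum T_n<\infty\Rightarrow$ non-transience'' is correct and is genuinely different from the paper's. The identity $\{\rho=n\}=A_n\cap D_{n+1}\cap\{R_n=n\}$ together with Harris/FKG gives the clean telescoping bound $\Prob(A_{n+1})\ge\Prob(A_n)(1-T_{n+1})$, and this direction indeed requires neither $X_{j+1}\succeq X_j$ nor the monotonicity of the $p_j$. The paper instead obtains both directions from a single dichotomy for an auxiliary ``all-active'' model $F^+$: with $N_n$ the number of frogs from $\{0,\dots,n-1\}$ that ever reach $n$ when every frog is active from time zero, one has $\{N_n=0\}=D_n$ in your notation, so $\mathbf{E}[K]:=\sum_n\Prob(D_n)$ is, up to harmless factors, the sum in \eqref{generalform}. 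Your FKG route to the easy direction is shorter and more transparent than extracting it from that dichotomy.

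The second direction, however, is not proved in your write-up, and the reverse inequality $\Prob(A_n\cap D_{n+1})\ge c\,\Prob(A_n)\Prob(D_{n+1})$ with $c>0$ uniform in $n$ is a genuine obstacle, not a technicality. Conditioning on $D_{n+1}$ caps every $R_j$ at level $n$; when the drifts are weak, the dominant contribution to $A_n$ may come from configurations in which a few $R_j$ overshoot well past $n$, and forcing those reaches to land exactly at $n$ can cost a factor that depends on $n$. Your ``rerouting/resampling'' sketch does not explain how the hypotheses neutralise this, and there is no off-the-shelf reverse-FKG statement that supplies such a $c$.

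The paper's device for this direction avoids correlation inequalities altogether. It proves the contrapositive: if $\mathbf{Q}(K=\infty)<1$ in the $F^+$ model, then there exists $L$ with $\mathbf{Q}\bigl(N_j>0\ \forall j>L\,\big|\,N_L=0\bigr)>0$; the monotonicity hypotheses $X_{j+1}\succeq X_j$ and $p_{j+1}\le p_j$ allow a shift-coupling of $(F^+\mid N_L=0)$ with $(F^+\mid N_{L'}=0)$ for any $L'>L$ that makes the latter at least as likely to have no further gaps. Hence the number of gaps to the right of $L$ has geometric tails and $\mathbf{E}[K]<\infty$, i.e.\ the series converges. Transience of the original frog model is then read off from $\{\text{min }M_j=0\}\Leftrightarrow\{K\ge1\text{ in }F^+\}$. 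If you want to salvage your framework, the natural translation is this: rather than comparing $A_n\cap D_{n+1}$ to $\Prob(A_n)\Prob(D_{n+1})$, observe that your events $D_n$ are exactly the gap events of the all-active model, so $\sum T_n=\infty$ says $\mathbf{E}[K]=\infty$; then use the shift-coupling (this is where the monotonicity enters) to show $K=\infty$ almost surely, which forces $\rho<\infty$ almost surely in the frog model.
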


\medskip
After establishing Theorem \ref{theorem:neat1} the focus will shift towards showing how it can be applied in a number of more specific cases.  The first application of the theorem will involve the Gantert and Schmidt model from \cite{Gantert}, and will entail showing how \eqref{gantschmtresult} can be achieved quite easily using the formula \eqref{generalform}.  After this, \ref{theorem:neat1} is used to obtain a formula (see Theorem \ref{theorem:neat3}) that provides a sharp condition distinguishing between transience and non-transience in the case where the $X_j$'s are i.i.d. and which, for the particular case where $X_j=1$, builds on the result from \cite{BMZ} by giving a sharp result that supersedes the soft condition in \eqref{bmzrelresult} and, for the case where $p_j=\frac{1}{2}+\frac{C}{\text{log}\ j}$ (for all but finitely many $j$), implies the existence of a phase transition at $C=\frac{\pi^2}{24}$.  Finally, \ref{theorem:neat1} will also be employed to obtain a formula that builds on the result from \cite{Rosenberg} by generalizing \eqref{myres} to cases where the $p_j$'s are not constant.  For these last two results, the proofs will require some light assumptions relating to the concavity of the sequences $\left\{p^{-1}_j\right\}$ and $\left\{\lambda_j\right\}$ (where $\lambda_j$ represents the Poisson mean of the distribution of $X_j$ in the final model discussed).

\section{Transience vs. non-transience for the general case}
  
  \subsection{$M_j\text{ and }N_j$} \label{ss:process N&M}
  
  In order to move towards a proof of Theorem \ref{theorem:neat1}, we begin by defining the process $\left\{M_j\right\}$ where, for each $j\geq 1$, $M_j$ represents the number of frogs originating in $\left\{0,1,\dots,j-1\right\}$ that ever hit the point $x=j$.  $\left\{M_j\right\}$ is now identified with a triple $(\Omega,\mathcal{F},{\bf P})$ defined as follows: $\Omega$ will represent the set of all functions $\omega:\mathbb{Z}^+\rightarrow\mathbb{N}$ (i.e. the set of all possible trajectories of $\left\{M_j\right\}$), $\mathcal{F}$ will represent the $\sigma$-field on $\Omega$ generated by the finite dimensional sets, and ${\bf P}$ will refer to the probability measure induced on $(\Omega,\mathcal{F})$ by the process $\left\{M_j\right\}$.  Since $\Prob(X_n\geq 1)\geq\Prob(X_1\geq 1)>0\ \forall\ n\geq 1$ (recall $X_{j+1}\succeq X_j\ \forall\ j\geq 1$) and the $X_j$'s are independent, it follows from Borel-Cantelli II that $\left\{X_j\geq 1\ \text{i.o.}\right\}\ \text{a.s.}$  Additionally, since each activated frog performs a random walk with nonzero leftward drift, this means that each activated frog will eventually hit the origin with probability 1.  Coupling this with the fact that $\left\{X_j\geq 1\ \text{i.o.}\right\}\ \text{a.s.}\implies\sum_{j=1}^{\infty}X_j=\infty\ \text{a.s.}$, along with the implication $M_l=0\implies M_j=0\ \forall\ j>l$, we find that\begin{equation}\label{impequiv}\left\{\text{infinitely many frogs hit the origin}\right\}\iff\text{min}\ M_j>0\end{equation}Now on account of \eqref{impequiv}, it follows that in order to establish Theorem \ref{theorem:neat1}, it suffices to show that\begin{equation}\label{fthsuff}\text{min}\ M_j=0\ \ {\bf P}-\text{a.s.}\iff\sum_{n=2}^{\infty}\prod_{j=1}^{n-1}f_j\Big(1-\Big(\frac{1-p_j}{p_j}\Big)^{n-j}\Big)=\infty\end{equation}With this in mind, we define a new model which we'll call the $\text{F}^+$ model.  This model will resemble the non-homogeneous frog model on $\mathbb{Z}$ in that the distribution of the number of frogs beginning at every vertex will be the same in the two cases, as will the drifts of the active frogs.  The only difference will be that in the $\text{F}^+$ model all frogs will begin as active frogs (i.e. they do not need to be landed on to be activated).  The next step is to now use the $\text{F}^+$ model to define the process $\left\{N_j\right\}$ where, for each $j\geq 1$, $N_j$ equals the number of frogs originating in $\left\{0,1,\dots,j-1\right\}$ that ever hit the point $x=j$ in the $\text{F}^+$ model (i.e. $\left\{N_j\right\}$ is identical to $\left\{M_j\right\}$ except that the $\text{F}^+$ model replaces the non-homogeneous frog model on $\mathbb{Z}$ in the definition).  $\left\{N_j\right\}$ can now be identified with the triple $(\Omega,\mathcal{F},{\bf Q})$, where ${\bf Q}$ will refer to the probability measure induced on $(\Omega,\mathcal{F})$ by the process $\left\{N_j\right\}$.  Having defined this construction, we'll now establish the following proposition, which will serve as the key step in proving Theorem \ref{theorem:neat1}.
  
  \medskip
  \begin{prop}\label{prop:neat2}
  Define the random variable $K(\omega)=\#\left\{j\in\mathbb{Z}^+:\omega(j)=0\right\}$.  Then ${\bf Q}(K=\infty)=1$ if and only if\begin{equation}\label{tightsum}\sum_{n=2}^{\infty}\prod_{j=1}^{n-1}f_j\Big(1-\Big(\frac{1-p_j}{p_j}\Big)^{n-j}\Big)=\infty\end{equation}If \eqref{tightsum} does not hold then ${\bf Q}(K=\infty)=0$.
  \end{prop}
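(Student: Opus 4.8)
The plan is to work throughout in the $\text{F}^+$ model, where every frog is active and hence all the walks are mutually independent, and to analyze the events $A_n := \{N_n = 0\}$ directly; since $K$ counts, under $\mathbf{Q}$, the indices $n$ with $N_n = 0$, we have $\{K=\infty\} = \limsup_n A_n$. Because $N_n$ is the number of frogs born in $\{0,1,\dots,n-1\}$ that ever reach $n$, a gambler's‑ruin computation — a frog born at $i$ reaches $n>i$ with probability $r_i^{n-i}$, where $r_i := (1-p_i)/p_i \in (0,1)$, independently over frogs — gives
\[
\mathbf{Q}(A_n) \;=\; (1 - r_0^{n})\prod_{j=1}^{n-1} f_j\Big(1 - r_j^{n-j}\Big),
\]
and, writing $D_{m,n}$ ($1\le m<n$) for the event that no frog born in $\{m,\dots,n-1\}$ ever reaches $n$, $\mathbf{Q}(D_{m,n}) = \prod_{i=m}^{n-1} f_i\big(1 - r_i^{n-i}\big) =: P_{m,n}$. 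Since $1-r_0 \le 1-r_0^{n}\le 1$, divergence of \eqref{tightsum} is equivalent to $\sum_n \mathbf{Q}(A_n)=\infty$. If \eqref{tightsum} fails we are done at once: $\sum_n\mathbf{Q}(A_n)<\infty$, so Borel--Cantelli I gives $\mathbf{Q}(\limsup_n A_n)=0$, i.e.\ $\mathbf{Q}(K=\infty)=0$ — proving both the ``only if'' of the equivalence and the last sentence.

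So assume $\sum_n\mathbf{Q}(A_n)=\infty$. I would first record a zero--one law: $\{K=\infty\}=\limsup_n A_n$ is a tail event for the independent sequence $\xi_0,\xi_1,\dots$, where $\xi_i$ records $X_i$ and the trajectories of the frogs born at $i$. Indeed, the frogs born in any initial block $\{0,\dots,k\}$ are a.s.\ finitely many, each with leftward drift, hence a.s.\ visit only a bounded set of sites; so for all large $n$ the event $A_n$ agrees, off a null set, with the event that no frog born in $\{k+1,\dots,n-1\}$ reaches $n$, which lies in $\sigma(\xi_i:i>k)$. Letting $k\to\infty$ places $\{K=\infty\}$ in the tail $\sigma$‑field, and Kolmogorov's $0$--$1$ law gives $\mathbf{Q}(K=\infty)\in\{0,1\}$; it therefore suffices to show $\mathbf{Q}(K=\infty)>0$.

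For that I would run a second‑moment (Kochen--Stone) argument. For $m<n$, on $A_m$ the frogs born in $\{0,\dots,m-1\}$ never reach $m$, a fortiori never reach $n$, so $A_m\cap A_n=A_m\cap D_{m,n}$; as these depend on disjoint groups of frogs, $\mathbf{Q}(A_m\cap A_n)=\mathbf{Q}(A_m)\,P_{m,n}$. Here the two monotonicity hypotheses enter: $X_{j+1}\succeq X_j$ makes $j\mapsto f_j$ pointwise non‑increasing on $[0,1]$, and $\{p_j\}$ decreasing makes $j\mapsto r_j$ increasing, so reindexing $P_{m,n}$ and $P_{1,\,n-m+1}$ by distance to the right endpoint and comparing factor by factor yields $P_{m,n}\le P_{1,\,n-m+1}\le (1-r_0)^{-1}\mathbf{Q}(A_{n-m+1})$. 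Hence $\mathbf{Q}(A_m\cap A_n)\le(1-r_0)^{-1}\mathbf{Q}(A_m)\mathbf{Q}(A_{n-m+1})$, and separating diagonal and off‑diagonal terms,
\[
\sum_{m,n\le N}\mathbf{Q}(A_m\cap A_n)\;\le\;\sum_{n\le N}\mathbf{Q}(A_n)\;+\;\frac{2}{1-r_0}\Big(\sum_{n\le N}\mathbf{Q}(A_n)\Big)^{2}.
\]
Since $\sum_{n\le N}\mathbf{Q}(A_n)\to\infty$, the Kochen--Stone lemma gives $\mathbf{Q}(\limsup_n A_n)\ge\tfrac12(1-r_0)>0$, and the zero--one law then forces $\mathbf{Q}(K=\infty)=1$.

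I expect the only genuine obstacle to be the dependence among the $A_n$ in the divergent case: the $A_n$ are positively correlated and $\mathbf{Q}(A_n)$ need not stay bounded away from $0$ (e.g.\ in the logarithmic‑drift regime with one frog per site), so neither a naive Borel--Cantelli II nor the trivial bound $\mathbf{Q}(A_m\cap A_n)\le\mathbf{Q}(A_m)$ suffices. What resolves it is the domination $P_{m,n}\le P_{1,\,n-m+1}$, and the substance of the proof is recognizing that the hypotheses ``$X_{j+1}\succeq X_j$'' and ``$\{p_j\}$ decreasing'' are precisely what is needed for that term‑by‑term comparison, so that the second moment of $\sum_{n\le N}\mathbf{1}_{A_n}$ stays comparable to the square of its mean. (When $\limsup_n\mathbf{Q}(A_n)>0$ one could instead invoke reverse Fatou, $\mathbf{Q}(\limsup_n A_n)\ge\limsup_n\mathbf{Q}(A_n)$, but the Kochen--Stone route covers every case uniformly.)
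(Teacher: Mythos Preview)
Your argument is correct, and it takes a genuinely different route from the paper's. Both proofs handle the convergent case identically (Borel--Cantelli~I, equivalently $\mathbf{E}[K]<\infty$), and both exploit the monotonicity hypotheses $X_{j+1}\succeq X_j$ and $p_j\downarrow$ to ``shift'' a block of sites to the left; but the surrounding machinery differs. The paper argues by contrapositive: assuming $\mathbf{Q}(K=\infty)<1$, it finds an $L$ with $\mathbf{Q}^{(L)}(V_L)>0$, uses a coupling of the shifted models $(\text{F}^+\mid N_L=0)$ and $(\text{F}^+\mid N_{L'}=0)$ to show $\mathbf{Q}^{(L')}(V_{L'})\ge\mathbf{Q}^{(L)}(V_L)$ for $L'>L$, and deduces via stopping times a geometric tail $\mathbf{Q}^{(L)}(K_L\ge n)\le(1-\mathbf{Q}^{(L)}(V_L))^n$, forcing $\mathbf{E}[K]<\infty$. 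You instead combine Kolmogorov's $0$--$1$ law with Kochen--Stone: the monotonicity comparison appears as the factorwise inequality $P_{m,n}\le P_{1,\,n-m+1}$, which yields the correlation bound $\mathbf{Q}(A_m\cap A_n)\le(1-r_0)^{-1}\mathbf{Q}(A_m)\mathbf{Q}(A_{n-m+1})$ and hence a second moment of order $S_N^2$. Your approach is arguably more streamlined and invokes only standard tools; the paper's coupling argument is more hands-on but yields the slightly stronger intermediate statement that $K_L$ has exponential tails once some $\mathbf{Q}^{(L)}(V_L)>0$. The underlying probabilistic content---that shifting a window to the right only makes extinction more likely---is the same in both.
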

  
  \medskip
  \noindent $Remark.$ It is worth noting that it cannot be assumed that $\left\{M_j\right\}$ and $\left\{N_j\right\}$ are Markov processes since $M_j$ ($N_j$ resp.) only gives the number of frogs originating to the left of the point $x=j$ that ever hit $x=j$, rather than also providing the information about where each such frog originated (a significant detail, since frog origin determines drift).  Nevertheless, because the only conditioning we will do with respect to these two processes will involve conditioning on $M_j$ ($N_j$ resp.) equalling $0$, they prove to be sufficient for our purposes.
  
  \begin{proof}[Proof of Proposition \ref{prop:neat2}]
  By a simple martingale argument the probability a frog starting at $x=j$ ever hits $x=n$ (for $n>j$) is $\Big(\frac{1-p_j}{p_j}\Big)^{n-j}$.  Hence, the probability that no frogs beginning at $x=j$ ever hit $x=n$ is$$\sum_{i=0}^{\infty}\Prob(X_j=i)\Big(1-\Big(\frac{1-p_j}{p_j}\Big)^{n-j}\Big)^i=f_j\Big(1-\Big(\frac{1-p_j}{p_j}\Big)^{n-j}\Big)$$It then follows that for every $n\geq 1$ we have$${\bf Q}(\omega(n)=0)=\Big(1-\Big(\frac{1-p_0}{p_0}\Big)^n\Big)\prod_{j=1}^{n-1}f_j\Big(1-\Big(\frac{1-p_j}{p_j}\Big)^{n-j}\Big)$$
  $$\implies{\bf E}[K]=\frac{2p_0-1}{p_0}+\sum_{n=2}^{\infty}\Big(1-\Big(\frac{1-p_0}{p_0}\Big)^n\Big)\prod_{j=1}^{n-1}f_j\Big(1-\Big(\frac{1-p_j}{p_j}\Big)^{n-j}\Big)$$(where ${\bf E}$ refers to expectation with respect to the probability measure ${\bf Q}$).  Since $\Big(1-\Big(\frac{1-p_0}{p_0}\Big)^n\Big)\rightarrow 1$ as $n\rightarrow\infty$, this means\begin{equation}\label{expequivsum}{\bf E}[K]<\infty\iff\sum_{n=2}^{\infty}\prod_{j=1}^{n-1}f_j\Big(1-\Big(\frac{1-p_j}{p_j}\Big)^{n-j}\Big)<\infty\end{equation}It now immediately follows that if the right side of \eqref{expequivsum} holds, then ${\bf Q}(K=\infty)=0$.  Hence, to prove the proposition it suffices to establish the implication ${\bf Q}(K=\infty)<1\implies{\bf E}[K]<\infty$ (note this is just the contrapositive of ${\bf E}[K]=\infty\implies{\bf Q}(K=\infty)=1$).
  
  Now since the event $\left\{K=\infty\right\}$ cannot depend on the behavior of the frogs from any finite collection of vertices (for the process $\left\{N_j\right\}$), it follows that ${\bf Q}(K=\infty|\omega(1)=0)={\bf Q}(K=\infty|\omega(1)=1)$, which in turn establishes the implication\begin{equation}\label{loneimp}{\bf Q}(K=\infty)<1\implies{\bf Q}(1\leq K<\infty)>0\end{equation}Next define $V_n=\left\{\omega\in\Omega:\omega(j)>0\ \forall\ j>n\right\}$ and assume ${\bf Q}(K=\infty)<1$.  Letting ${\bf Q}^{(n)}$ denote the probability measure obtained by conditioning on the event $\omega(n)=0$, \eqref{loneimp} then implies that there must exist $L\geq 1$ such that ${\bf Q}^{(L)}(V_L)>0$.  Additionally, because $X_{i_1+i_2}\succeq X_{i_1}\ \forall\ i_1,i_2\geq 1$ (since $X_{i+1}\succeq X_i\ \forall\ i\geq 1$ and $\succeq$ is transitive) and because the sequence of drifts $\left\{p_j\right\}$ is decreasing with respect to $j$, this implies that for any $L'>L$ the models $\big(\text{F}^+|N_L=0\big)$ and $\big(\text{F}^+|N_{L'}=0\big)$ (i.e. the $\text{F}^{+}$ model with all frogs to the left of the point $x=L$ removed) can be coupled so that (i) every frog originating at $x=L+j$ in $\big(\text{F}^+|N_L=0\big)$ corresponds to a particular frog originating at $x=L'+j$ in the coupled model $\big(\text{F}^+|N_{L'}=0\big)$, and (ii) whenever a frog in $\big(\text{F}^+|N_L=0\big)$ takes a step to the right, the corresponding frog in $\big(\text{F}^+|N_{L'}=0\big)$ does as well.  Letting $K_n(\omega)=\#\left\{j>n:\omega(j)=0\right\}$, the above coupling then implies that\begin{equation}\label{sdimpineq}\big(K_L|\omega(L)=0\big)\succeq\big(K_{L'}|\omega(L')=0\big)\implies{\bf Q}^{(L')}(V_{L'})\geq{\bf Q}^{(L)}(V_L)\end{equation}Now if we define the stopping times $T_n$ where $T_1(\omega)=\text{min}\left\{j\geq 1:\omega(L+j)=0\right\}$ and, for $n\geq 2$, $T_n(\omega)=\text{min}\left\{j>T_{n-1}(\omega):\omega(L+j)=0\right\}$, we find that for every $n\geq 2$\begin{equation}\label{eqandineq}{\bf Q}^{(L)}(K_L\geq n)=\sum_{j=1}^{\infty}{\bf Q}^{(L)}(T_{n-1}=j){\bf Q}^{(L+j)}(V^c_{L+j})\leq{\bf Q}^{(L)}(V^c_L){\bf Q}^{(L)}(K_L\geq n-1)\end{equation}(where the inequality follows from \eqref{sdimpineq}).  From this it then follows that for $n\geq 1$ $${\bf Q}^{(L)}(K_L\geq n)\leq\big(1-{\bf Q}^{(L)}(V_L)\big)^n\implies{\bf E}[K_L|\omega(L)=0]\leq\sum_{n=1}^{\infty}\big(1-{\bf Q}^{(L)}(V_L)\big)^n=\frac{1-{\bf Q}^{(L)}(V_L)}{{\bf Q}^{(L)}(V_L)}<\infty$$Since ${\bf E}[K_L]\leq {\bf E}[K_L|\omega(L)=0]$ and ${\bf E}[K]\leq L+{\bf E}[K_L]$, we find that$${\bf E}[K]\leq L+\frac{1-{\bf Q}^{(L)}(V_L)}{{\bf Q}^{(L)}(V_L)}<\infty$$Hence, we've established the implication ${\bf Q}(K=\infty)<1\implies{\bf E}[K]<\infty$, which then gives the implication ${\bf E}[K]=\infty\implies{\bf Q}(K=\infty)=1$, thus completing the proof of the proposition.
  \end{proof}
  
  \subsection{Proving Theorem 1.1} \label{ss:pr1.1}
  
  \begin{proof}[Proof of Theorem \ref{theorem:neat1}]
  Coupling the fact that Theorem \ref{theorem:neat1} is equivalent to \eqref{fthsuff} with the fact that ${\bf P}(\text{min}\ \omega(j)=0)=1\iff{\bf Q}(K\geq 1)=1$, we find the task of proving Theorem \ref{theorem:neat1} is reduced to establishing that\begin{equation}\label{importanteq}{\bf Q}(K\geq 1)=1\iff\sum_{n=2}^{\infty}\prod_{j=1}^{n-1}f_j\Big(1-\Big(\frac{1-p_j}{p_j}\Big)^{n-j}\Big)=\infty\end{equation}Noting that the implication\begin{equation}\label{sumpdimpeq}\sum_{n=2}^{\infty}\prod_{j=1}^{n-1}f_j\Big(1-\Big(\frac{1-p_j}{p_j}\Big)^{n-j}\Big)=\infty\implies{\bf Q}(K\geq 1)=1\end{equation}follows immediately from Proposition \ref{prop:neat2}, as does the fact that$$\sum_{n=2}^{\infty}\prod_{j=1}^{n-1}f_j\Big(1-\Big(\frac{1-p_j}{p_j}\Big)^{n-j}\Big)<\infty\implies{\bf Q}(K<\infty)=1$$our task is reduced to establishing the implication ${\bf Q}(K<\infty)=1\implies{\bf Q}(K=0)>0$.  Now recalling that \eqref{loneimp} implies that if ${\bf Q}(K<\infty)=1$ then $\exists\ L$ such that ${\bf Q}(V_L|\omega(L)=0)>0$, we find that ${\bf Q}(K=0)\geq\Big(\frac{1-p_0}{p_0}\Big)^L{\bf Q}(V_L|\omega(L)=0)>0$ (where $\Big(\frac{1-p_0}{p_0}\Big)^L$ is the probability that the frog starting at $x=0$ in the $\text{F}^+$ model ever hits the point $x=L$), thus completing the final step of the proof.
  \end{proof}
  
  \subsection{A simple proof of Gantert and Schmidt's result} \label{ss:GSpr}
  
  In order to demonstrate the utility of Theorem \ref{theorem:neat1}, this section is devoted to showing how it can be used to obtain a simple proof of the result from \cite{Gantert} described in the introduction.  The proof will be broken up into two parts.  While part 1 uses a method similar to Gantert and Schmidt's, part 2 employs a more novel approach which simplifies matters considerably.
  
  \medskip
  \noindent
  {\bf Part 1:}
  $\text{WTS: }\E[\text{log}^+\eta]=\infty\implies\text{recurrence}$
  
  \medskip
  \noindent
  Begin by defining the process $\left\{A_j\right\}$ where for every $j\in\mathbb{Z}/\left\{0\right\}\ A_j$ represents the number of distinct frogs originating at $x=j$ that ever hit the origin in the Gantert-Schmidt model.  Next we define the triple $(\Omega^*,\mathcal{F}^*,{\bf P}^*)$ where $\Omega^*$ represents the set of functions $\omega:\mathbb{Z}/\left\{0\right\}\rightarrow\mathbb{N}$ (i.e. the possible trajectories of $\left\{A_j\right\}$), $\mathcal{F}^*$ represents the $\sigma$-field on $\Omega^*$ generated by the finite dimensional sets, and ${\bf P}^*$ represents the probability measure induced on $(\Omega^*,\mathcal{F}^*)$ by the process $\left\{A_j\right\}$.  Additionally, denoting the two sided sequence $\left\{...,\eta_{-2},\eta_{-1},\eta_1,\eta_2,\dots\right\}$ that gives the number of sleeping frogs beginning at every nonzero vertex as $H$, we define (for every instance of $H$) the process $\left\{A^{(H)}_j\right\}$ in the same way as $\left\{A_j\right\}$, but where the number of sleeping frogs starting at each vertex is given by the terms of $H$.  As with $\left\{A_j\right\}$, each such process can be identified with a triple $(\Omega^*,\mathcal{F}^*,{\bf P}^*_H)$, where ${\bf P}^*_H$ represents the probability measure that $\left\{A^{(H)}_j\right\}$ induces on $(\Omega^*,\mathcal{F}^*)$ (the same $\sigma$-field referenced above).  Now since the activated frogs in this model all have nonzero leftward drift, this means all frogs that begin to the left of the origin are activated with probability $1$.  Hence, for $j\geq 1$ and $H=\left\{...,\eta_{-2},\eta_{-1},\eta_1,\eta_2,\dots\right\}$, we find that ${\bf P}^*_H(\omega(-j)>0)=1-\Big(1-\Big(\frac{1-p}{p}\Big)^j\Big)^{\eta_{-j}}$.  Now defining $U(\omega)=\#\left\{j\in\mathbb{Z}^+:\omega(-j)>0\right\}$, noting that the random variables $\omega(-j)$ are independent with respect to ${\bf P}^*_{H}$, and noting that if $\eta_{-j}\geq\Big(\frac{p}{1-p}\Big)^j$ then ${\bf P}^*_H(\omega(-j)>0)=1-\Big(1-\Big(\frac{1-p}{p}\Big)^j\Big)^{\eta_{-j}}\geq 1-e^{-1}>0$, we see that the implication\begin{equation}\label{ineqioimpeq}\left\{\eta_{-j}\geq\Big(\frac{p}{1-p}\Big)^j\ \text{i.o.}\right\}\implies{\bf P}^*_H(U=\infty)=1\end{equation}follows from B.C. II.  Furthermore, if we define $\Gamma=\left\{H\in(\eta_j)_{j\in\mathbb{Z}^*}:\eta_{-j}\geq\big(\frac{p}{1-p}\big)^j\ \text{i.o.}\right\}$ and let $\mu$ represent the probability measure associated with $(\eta_j)_{j\in\mathbb{Z}^*}$, then since$$\sum_{j=1}^{\infty}\Prob\Big(\eta\geq\Big(\frac{p}{1-p}\Big)^j\Big)=\sum_{j=1}^{\infty}\Prob\Big(\text{log}^+\eta\geq j\text{log}\Big(\frac{p}{1-p}\Big)\Big)\geq\sum_{j=1}^{\infty}\Prob\Big(\text{log}^+\eta\geq j\left\lceil{\text{log}\Big(\frac{p}{1-p}\Big)}\right\rceil\Big)$$ $$\geq\frac{1}{\left\lceil{\text{log}\Big(\frac{p}{1-p}\Big)}\right\rceil}\Big(\E[\text{log}^+\eta]-\left\lceil{\text{log}\Big(\frac{p}{1-p}\Big)}\right\rceil\Big)$$we find that another application of B.C. II gives the implication $\E[\text{log}^+\eta]=\infty\implies\mu(\Gamma)=1$.  Alongside \eqref{ineqioimpeq}, this establishes part 1.
  
  \medskip
  \noindent
  {\bf Part 2:}
  $\text{WTS:}\ \E[\text{log}^+\eta]<\infty\implies\text{transience}$
  
  \medskip
  \noindent
  Choose a constant $C$ such that $0<C<1$ and $C\cdot\frac{p}{1-p}>1$.  Noting that$$\sum_{j=1}^{\infty}\mu\Big(\eta_{-j}\geq C^j\Big(\frac{p}{1-p}\Big)^j\Big)=\sum_{j=1}^{\infty}\Prob\Big(\text{log}^+\eta\geq j\text{log}\Big(\frac{Cp}{1-p}\Big)\Big)\leq\frac{1}{\text{log}\Big(\frac{Cp}{1-p}\Big)}\E[\text{log}^+\eta]$$it follows from B.C. I that\begin{equation}\label{expimpz}\E[\text{log}^+\eta]<\infty\implies\mu\Big(\eta_{-j}\geq C^j\Big(\frac{p}{1-p}\Big)^j\ \text{i.o.}\Big)=0\end{equation}In addition, since for $j\geq 1$ we have ${\bf P}^*_H(\omega(-j)>0)=1-\Big(1-\Big(\frac{1-p}{p}\Big)^j\Big)^{\eta_{-j}}$ (see line preceding \eqref{ineqioimpeq}) and $$1-\Big(1-\Big(\frac{1-p}{p}\Big)^j\Big)^{C^j\big(\frac{p}{1-p}\big)^j}=(1+o(1))C^j\ \text{as}\ j\rightarrow\infty$$we find that if $\eta_{-j}\geq C^j\Big(\frac{p}{1-p}\Big)^j$ at only finitely many points, then $\sum_{j=1}^{\infty}{\bf P}^*_H(\omega(-j)>0)<\infty$.  Now coupling this with \eqref{expimpz} and employing B.C. I, we get (for $j\geq 1$)\begin{equation}\label{expfimpz}\E[\text{log}^+\eta]<\infty\implies{\bf P}^*(\omega(-j)>0\ \text{i.o.})=0\end{equation}
  
  Letting $\mathcal{A}=\sum_{j=1}^{\infty}\omega(-j)$, it follows from \eqref{expfimpz} that $\E[\text{log}^+\eta]<\infty\implies{\bf P}^*(\mathcal{A}<\infty)=1$.  If we now let $\mathcal{B}=\sum_{j=1}^{\infty}\omega(j)$, we find that in order to prove that $\E[\text{log}^+\eta]<\infty$ implies transience, it suffices to establish that for each $k$ with $0\leq k<\infty$ the following implication holds.\begin{equation}\label{fexpas}\E[\text{log}^+\eta]<\infty\implies{\bf P}^*(\mathcal{B}<\infty|\mathcal{A}=k)=1\end{equation}Now note that in terms of whether or not $\mathcal{B}=\infty$, the only relevant detail regarding the frogs beginning to the left of the origin is how far the one(s) that travels the furthest to the right of the origin gets.  Denoting this value as $\mathcal{C}$, if we assume ${\bf P}^*(\mathcal{B}=\infty)>0$, then there would have to exist $r\geq 0$ such that ${\bf P}^*(\mathcal{B}=\infty|\mathcal{C}=r)>0$.  Since the frog beginning at the origin reaches the point $x=r$ with positive probability, it would follow that ${\bf P}^*(\mathcal{B}=\infty|\mathcal{A}=0)>0$.  Hence, in order to establish \eqref{fexpas}, it suffices to establish the implication $\E[\text{log}^+\eta]<\infty\implies{\bf P}^*(\mathcal{B}<\infty|\mathcal{A}=0)=1$.
  
  The next step is to observe that $\big(\mathcal{B}|\mathcal{A}=0\big)$ has the same distribution as the number of distinct (initially sleeping) frogs that hit the origin in the non-homogeneous model on $\mathbb{Z}$ (in the case where $p_j=p$ for each $j\geq 0$ and the $X_j$'s are i.i.d. copies of $\eta$).  Using Theorem \ref{theorem:neat1}, it then follows that in order to establish that $\E[\text{log}^+\eta]<\infty$ implies transience, it is sufficient to establish the implication\begin{equation}\label{expfspinf}\E[\text{log}^+\eta]<\infty\implies\sum_{n=2}^{\infty}\prod_{j=1}^{n-1}f\Big(1-\Big(\frac{1-p}{p}\Big)^j\Big)=\infty\end{equation}(where $f$ represents the probability generating function of $\eta$).  Now noting that\begin{equation}\label{speqess}\sum_{n=2}^{\infty}\prod_{j=1}^{n-1}f\Big(1-\Big(\frac{1-p}{p}\Big)^j\Big)=\E\big[\sum_{n=2}^{\infty}e^{\sum_{j=1}^{n-1}\text{log}(1-(\frac{1-p}{p})^j)X_j}\big]\end{equation}we observe that because $\text{log}\big(1-\big(\frac{1-p}{p}\big)^j\big)=-(1+o(1))\big(\frac{1-p}{p}\big)^j$ as $j\rightarrow\infty$, it follows that if we have $0<C<1$ such that $\frac{Cp}{1-p}>1$ and $X_j\leq\Big(\frac{Cp}{1-p}\Big)^j$ for all but finitely many $j$, then$$\sum_{n=2}^{\infty}e^{\sum_{j=1}^{n-1}\text{log}(1-(\frac{1-p}{p})^j)X_j}=\infty$$When coupled with \eqref{expimpz} (where we replace $\eta_{-j}$ with $\eta_j$ on the right) and \eqref{speqess}, this establishes \eqref{expfspinf} which, as we saw, indicates that the left side of \eqref{expfspinf} implies transience, thus completing the proof.
  
  \section{Applications of Theorem \ref{theorem:neat1}}
  
  \subsection{Sharp conditions for the i.i.d. case} \label{ss:gengansch}
  
  Having shown in Section \ref{ss:GSpr} how Theorem \ref{theorem:neat1} can be used to obtain a concise proof of Gantert and Schmidt's result from \cite{Gantert}, this subsection is devoted to establishing a new result that involves a model similar to the one from \cite{Gantert}, but where the drifts of the individual frogs are dependent on where they originated (it will be assumed that no sleeping frogs reside to the left of the origin).  This result comes in the form of the following theorem.
  
  \begin{theorem}\label{theorem:neat3}
  For any version of the non-homogeneous frog model on $\mathbb{Z}$ for which the $X_j$'s are i.i.d. with $\E[X_1]<\infty$, $p_j=\frac{1}{2}+a_j$ with $g(j)=\frac{1}{a_j}$ being concave, and $d=\text{min}\left\{j:\Prob(X_1=j)>0\right\}$, the model is transient if and only if $\sum_{n=1}^{\infty}\frac{e^{-\frac{\mathcal{K}}{4a_n}}}{(a_n)^{d/2}}=\infty$ (where $f$ represents the generating function of $X_j$ and $\mathcal{K}=-\int_0^{\infty}\text{log}[f(1-e^{-x})]dx$).
  \end{theorem}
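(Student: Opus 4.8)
The plan is to feed the i.i.d.\ hypothesis into Theorem~\ref{theorem:neat1} and then carry out an asymptotic analysis of the resulting series. Since the $X_j$ are i.i.d.\ we have $f_j=f$ for every $j$, so by Theorem~\ref{theorem:neat1} the model is transient if and only if $\sum_{n\ge 2}T_n=\infty$, where $T_n:=\prod_{j=1}^{n-1}f\big(1-(\tfrac{1-p_j}{p_j})^{n-j}\big)$; writing $U_n:=e^{-\mathcal K/(4a_n)}a_n^{-d/2}$, it thus suffices to show that $\sum_nT_n$ and $\sum_nU_n$ converge or diverge together. Set $b_j:=\log\frac{1+2a_j}{1-2a_j}$, so that $\big(\tfrac{1-p_j}{p_j}\big)^{n-j}=e^{-b_j(n-j)}$ exactly; expanding the logarithm gives $b_j=4a_j+O(a_j^3)$, whence $\{b_j\}$ is decreasing (as $\{a_j\}$ is), $\frac1{b_j}=\frac1{4a_j}+O(a_j)$, and $\log\frac1{b_j}=\log\frac1{a_j}+O(1)$. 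With $\phi(x):=-\log f(1-e^{-x})\ge 0$ we have $T_n=e^{-S_n}$, where
\begin{equation}\label{eq:SnRep}
S_n=\sum_{j=1}^{n-1}\phi\big(b_j(n-j)\big)=\sum_{k=1}^{n-1}\phi\big(b_{n-k}\,k\big).
\end{equation}
The core of the argument is the estimate $S_n=\frac{\mathcal K}{4a_n}-\frac d2\log\frac1{a_n}+O(1)$ for large $n$: this gives $T_n\asymp U_n$ and the theorem at once. I would prove this estimate in full when $1/a_n$ is not too large, and dispose of the remaining regime by showing that then both $\sum_nT_n$ and $\sum_nU_n$ converge outright.

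First I would record the behaviour of $\phi$. From $f(s)=\Prob(X_1=d)s^d(1+o(1))$ as $s\downarrow 0$ one gets $\phi(x)=-d\log x+O(1)$ as $x\downarrow 0$; from $1-f(1-e^{-x})=\E\big[1-(1-e^{-x})^{X_1}\big]$, which lies between $\Prob(X_1\ge 1)e^{-x}$ and $\E[X_1]e^{-x}$, one gets $\phi(x)=\Theta(e^{-x})$ as $x\to\infty$ (this is where $\E[X_1]<\infty$ enters), and likewise $\phi'$ and $\phi''$ decay exponentially at infinity while near $0$ they are $O(1/x)$ and $O(1/x^2)$; finally $\phi$ is positive and decreasing because $f$ and $1-e^{-x}$ are increasing. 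In particular $\mathcal K=\int_0^\infty\phi(x)\,dx<\infty$.

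For the \emph{upper} bound on $S_n$ — which gives the implication $\sum_nU_n=\infty\Rightarrow\sum_nT_n=\infty$ — the point is that $b_{n-k}\ge b_n$, so \eqref{eq:SnRep} yields $S_n\le\sum_{k\ge 1}\phi(b_nk)$, and this last sum is controlled by a midpoint-rule comparison with $\frac1{b_n}\int_0^\infty\phi$: pairing $\phi(b_nk)$ with $\frac1{b_n}\int_{(k-1/2)b_n}^{(k+1/2)b_n}\phi$, the discretization errors are $O\big(b_n^2|\phi''(b_nk)|\big)$, which are $O(1/k^2)$ for $k\ge2$ and $O(1)$ for $k=1$ (the singularity of $\phi$ at $0$ lying outside the first cell), hence summable to $O(1)$; and $\frac1{b_n}\int_{b_n/2}^\infty\phi=\frac{\mathcal K}{b_n}-\frac1{b_n}\int_0^{b_n/2}\phi=\frac{\mathcal K}{b_n}-\frac d2\log\frac1{b_n}+O(1)$ by the near-zero expansion of $\phi$. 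This gives $S_n\le\frac{\mathcal K}{4a_n}-\frac d2\log\frac1{a_n}+O(1)$ with no hypotheses beyond $a_n\to 0$ (the case $a_n\not\to 0$ being trivial, since then $S_n$ stays bounded and both series diverge); note that the naive one-sided Riemann bounds would be off by a factor of $2$ in the logarithmic term, so the midpoint refinement is essential here.

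For the matching \emph{lower} bound — the implication $\sum_nU_n<\infty\Rightarrow\sum_nT_n<\infty$ — I would split on the size of $g(n)=1/a_n$. If $g(n)>(\log n)^2$ for all large $n$, both series converge trivially: $e^{-\mathcal K/(4a_n)}$ then decays faster than any power of $1/n$, overwhelming the factor $a_n^{-d/2}=g(n)^{d/2}=O(n^{d/2})$ (a concave $g$ grows at most linearly), while keeping only the bounded-argument terms of \eqref{eq:SnRep} gives $S_n\ge c/a_n$, forcing $e^{-S_n}$ to be summable as well. If instead $g(n)\le(\log n)^2$, then choosing a window $W=W_n$ of width $\asymp g(n)(\log g(n))^2=o(n)$ and using that $\{b_j\}$ is decreasing, \eqref{eq:SnRep} gives $S_n\ge\sum_{k=1}^{W}\phi(b_{n-W}\,k)$; here the concavity of $g$ enters through $g(n)-g(n-W)\le W\,g'(n-W)\le\frac{W\,g(n)}{n-W-1}=o(1)$, so $b_{n-W}=b_n\big(1+o(1/g(n))\big)$, and since $b_{n-W}W\to\infty$ the sum $\sum_{k=1}^{W}\phi(b_{n-W}k)$ differs from $\sum_{k\ge1}\phi(b_nk)$ by only $o(1)$ (the tail past $W$ plus the perturbation of the argument), which by the midpoint estimate above is $\frac{\mathcal K}{4a_n}-\frac d2\log\frac1{a_n}+O(1)$. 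Combining the two cases gives $\sum_nU_n<\infty\Rightarrow\sum_nT_n<\infty$, completing the proof. The main obstacle is precisely this lower bound in the range $g(n)\le(\log n)^2$: pinning the coefficient $\frac d2\log\frac1{a_n}$ to within $O(1)$ forces both the second-order (midpoint) treatment of $\sum_k\phi(b_nk)$ and a quantitative use of the concavity of $1/a_j$, in order to guarantee that the variation of the drifts over the window of indices that carries the sum does not contaminate the logarithmic term.
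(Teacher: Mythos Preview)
Your outline is correct and follows the same broad strategy as the paper: invoke Theorem~\ref{theorem:neat1}, then show that $-\log T_n=\frac{\mathcal K}{4a_n}-\frac d2\log\frac1{a_n}+O(1)$, handling the regime of fast-growing $1/a_n$ by a separate direct argument. The organisation and several of the technical devices differ, however, and in a couple of places your version is cleaner.

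The paper works with $w_n=\frac{4a_n}{1+2a_n}$ and establishes a chain of four equivalences: (i) replace $w_{n-j}$ by $w_n$ inside the product; (ii) replace $(1-w_n)^j$ by $e^{-w_nj}$; (iii) extend the finite product to an infinite one; (iv) compare $\prod_{j\ge1}f(1-e^{-w_nj})$ with $e^{-\mathcal K/w_n}w_n^{-d/2}$ through an explicit Euler--Maclaurin-type computation using $\frac{d}{dx}\log f(x)=\frac{d}{x}+O(1)$. Steps (i)--(iii) are carried out under the standing hypothesis $1/a_n=O(\sqrt n)$, and the general case is recovered at the end by replacing $a_n$ with $\tilde a_n=\max\big(a_n,\frac{1}{3\sqrt n}\big)$ and checking this does not change the convergence of either series. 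Your substitution $b_j=\log\frac{1+2a_j}{1-2a_j}$ eliminates step~(ii) entirely; your midpoint-rule comparison of $\sum_{k\ge1}\phi(b_nk)$ with $\frac1{b_n}\int_{b_n/2}^\infty\phi$ is a tidy replacement for step~(iv) that produces the coefficient $\frac d2$ directly; and your window argument in the regime $g(n)\le(\log n)^2$ fuses the paper's steps~(i) and~(iii). Your truncation threshold $(\log n)^2$ is also sharper than the paper's $\sqrt n$, which costs nothing here.

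Two places deserve more care. First, the dichotomy on $g(n)$ versus $(\log n)^2$ should be read termwise in $n$ rather than as ``for all large $n$'': a concave $g$ and the concave (for $n>e$) function $(\log n)^2$ can in principle cross more than once, so you should split $\sum_n T_n$ over the two index sets and treat each piece. Second, the Case~1 claim ``keeping only the bounded-argument terms gives $S_n\ge c/a_n$'' is correct but not immediate: it needs the concavity of $g$ to ensure there are $\asymp g(n)$ indices $k$ with $b_{n-k}k$ bounded. One way is to note that concavity between $1$ and $n$ gives $g(n-k)\ge\frac{n-k-1}{n-1}\,g(n)$, so $b_{n-k}\le\frac{C}{g(n)}$ uniformly for $k\le n/2$, whence $b_{n-k}k\le 1$ for all $k\lesssim g(n)$; since $g$ concave forces $g(n)=O(n)$, this range is nonempty and yields $S_n\ge \phi(1)\cdot c\,g(n)$.
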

  
 \medskip
 \noindent $Remark\ 1.$ Note that $X_1$ having finite first moment (as stated in the theorem) gives us$$\E[X_1]<\infty\implies f'(1)=\E[X_1]<\infty\implies\text{log}[f(1-e^{-x})]=-q\cdot e^{-x}+o(e^{-x})\implies\mathcal{K}<\infty$$(where $q=f'(1)$).
 
 \medskip
 \noindent $Remark\ 2.$ One noteworthy (and immediate) consequence of Theorem \ref{theorem:neat3} is that for fixed $f$, $a_n=\frac{\mathcal{K}/4}{\text{log}n}$ (for all but finitely many $n$) represents a natural critical case in the sense that for $a_n=\frac{C}{\text{log}n}$ the model is transient if and only if $C\geq\mathcal{K}/4$.  An instance of particular significance is the case where $X_j=1\ \forall\ j$ (i.e. each positive integer point begins with exactly one sleeping frog).  Since in this scenario $f(x)=x$, we find that$$\mathcal{K}=\int_0^{\infty}|\text{log}(1-e^{-x})|dx=\int_0^{\infty}\sum_{n=1}^{\infty}\frac{e^{-nx}}{n}dx=\sum_{n=1}^{\infty}\int_0^{\infty}\frac{e^{-nx}}{n}dx=\sum_{n=1}^{\infty}\frac{1}{n^2}=\frac{\pi^2}{6}$$Hence, it follows that if $a_n=\frac{C}{\text{log}n}$, then the model is transient if and only if $C\geq\frac{\pi^2}{24}$, thus providing a new phase transition for the model from \cite{BMZ} that was mentioned in the introduction.
  
  \medskip
  \begin{proof}[Proof of Theorem \ref{theorem:neat3}]
   Given our result in Theorem \ref{theorem:neat1}, it follows that in order to establish this new result, it will suffice to show that\begin{equation}\label{equivpzez}\sum_{n=1}^{\infty}\frac{e^{-\frac{\mathcal{K}}{4a_n}}}{(a_n)^{d/2}}=\infty\iff\sum_{n=2}^{\infty}\prod_{j=1}^{n-1}f\Big(1-\Big(1-\frac{4a_{n-j}}{1+2a_{n-j}}\Big)^j\Big)=\infty\end{equation}(where the expression on the right in \eqref{equivpzez} is obtained by substituting $\frac{1}{2}+a_j$ for $p_j$ and switching $j$ and $n-j$ in \eqref{generalform}).  Furthermore, if we define $w_n=\frac{4a_n}{1+2a_n}$ and note that\begin{equation}\label{weqaeqratio}\frac{e^{-\frac{\mathcal{K}}{w_n}}}{(w_n)^{d/2}}\bigg/\frac{e^{-\frac{\mathcal{K}}{4a_n}}}{(a_n)^{d/2}}\rightarrow A e^{-\frac{\mathcal{K}}{2}}\ \text{as }n\rightarrow\infty\end{equation}(where $A=\underset{n\rightarrow\infty}{\text{lim}}\big(\frac{1+2a_n}{4}\big)^{d/2}$) we find that \eqref{equivpzez} is equivalent to the following:\begin{equation}\label{newetequiv}\sum_{n=1}^{\infty}\frac{e^{-\frac{\mathcal{K}}{w_n}}}{(w_n)^{d/2}}=\infty\iff\sum_{n=2}^{\infty}\prod_{j=1}^{n-1}f(1-(1-w_{n-j})^j)=\infty\end{equation}We'll first establish \eqref{equivpzez} (via \eqref{newetequiv}) under the condition that $a^{-1}_n$ is $O\big(\sqrt{n}\big)$ (see steps (i)-(iv)), following which we'll address the general case.
   
   \medskip
   \noindent
   (i) $\sum_{n=2}^{\infty}\prod_{j=1}^{n-1}f(1-(1-w_n)^j)=\infty\iff\sum_{n=2}^{\infty}\prod_{j=1}^{n-1}f(1-(1-w_{n-j})^j)=\infty$
   
   \medskip
   \noindent
   Since $a_n$ is decreasing this means $w_n$ is as well, from which it follows that$$\prod_{j=1}^{n-1}f(1-(1-w_{n-j})^j)\geq\prod_{j=1}^{n-1}f(1-(1-w_n)^j)$$for all $n$.  Hence, in order to establish (i) it suffices to show that\begin{equation}\label{1stlogdiff}\text{limsup}\sum_{j=1}^{n-1}\text{log}[f(1-(1-w_{n-j})^j)]-\text{log}[f(1-(1-w_n)^j)]<\infty\end{equation}Rewriting the expression in \eqref{1stlogdiff} (see below) we now get the inequality\begin{equation}\label{2ndlogdiff}\text{limsup}\sum_{j=1}^{n-1}\text{log}[f(1-(1-w_n)^j+\big((1-w_n)^j-(1-w_{n-j})^j\big))]-\text{log}[f(1-(1-w_n)^j)]\end{equation}$$\leq\text{limsup}\sum_{j=1}^{n-1}\text{log}[f(1-(1-w_n)^j+\big((j\cdot(w_{n-j}-w_n))\wedge (1-w_n)\big)\cdot(1-w_n)^{j-1})]-\text{log}[f(1-(1-w_n)^j)]$$Since $a^{-1}_n$ being $O\big(\sqrt{n}\big)$ implies $w^{-1}_n$ is as well, this means that the larger of the two expressions in \eqref{2ndlogdiff} is equal to the smaller expression in the following inequality.\begin{equation}\label{logdiffslog}\underset{n-1>\frac{1}{w_n}}{\text{limsup}}\sum_{j=1}^{n-1}\text{log}[f(1-(1-w_n)^j+\big((j\cdot(w_{n-j}-w_n))\wedge (1-w_n)\big)\cdot(1-w_n)^{j-1})]-\text{log}[f(1-(1-w_n)^j)]\end{equation}$$\leq\underset{n-1>\frac{1}{w_n}}{\text{limsup}}\sum_{j\leq\frac{1}{w_n}}\text{log}[f(1-(1-w_n)^j+\big((j\cdot(w_{n-j}-w_n))\wedge (1-w_n)\big)\cdot(1-w_n)^{j-1})]-\text{log}[f(1-(1-w_n)^j)]$$ $$+\underset{n-1>\frac{1}{w_n}}{\text{limsup}}\sum_{\frac{1}{w_n}<j\leq n-1}\frac{q\cdot(w_{n-j}-w_n)\cdot j\cdot(1-w_n)^{j-1}}{f(1-(1-w_n)^{\frac{1}{w_n}})}$$(recall that $q=f'(1)$).  The second term to the right of the inequality in \eqref{logdiffslog} can now be bounded above by\begin{equation}\label{2ndtbnds}\frac{q}{f(1-e^{-1})}\underset{n-1>\frac{1}{w_n}}{\text{limsup}}\sum_{\frac{1}{w_n}<j\leq n-1}(w_{n-j}-w_n)\cdot j\cdot(1-w_n)^{j-1}\end{equation}$$\leq\frac{q}{f(1-e^{-1})}\underset{n-1>\frac{1}{w_n}}{\text{limsup}}\sum_{\frac{1}{w_n}<j\leq n-1}\Bigg(1-\frac{w_n}{w_{n-j}}\Bigg)\cdot\frac{w_{n-j}}{w_n}\cdot w_n\cdot j\cdot e^{-w_n (j-1)}$$ $$\leq\frac{q\cdot e}{f(1-e^{-1})}\underset{n-1>\frac{1}{w_n}}{\text{limsup}}\sum_{\frac{1}{w_n}<j\leq n-1}\frac{1}{w_n\cdot(n-j)}\cdot(w_n\cdot j)^2\cdot e^{-w_n j}$$(where the final inequality in \eqref{2ndtbnds} follows from the fact that $\frac{w_{n-j}}{w_n}\leq\frac{n}{n-j}$, which follows from the concavity of $\frac{1}{w_n}$, which in turn follows from the concavity of $\frac{1}{a_n}$).  Next we bound the final term in \eqref{2ndtbnds} by\begin{equation}\label{divbli}\frac{q\cdot e}{f(1-e^{-1})\cdot\text{liminf}\ nw_n^2}\ \underset{n-1>\frac{1}{w_n}}{\text{limsup}}\sum_{\frac{1}{w_n}<j\leq n-1}\frac{1}{1-\frac{j}{n}}\cdot w_n\cdot(w_n j)^2\cdot e^{-w_n j}\end{equation}$$\leq\frac{q\cdot e}{f(1-e^{-1})\cdot\text{liminf}\ nw_n^2}\ \underset{n-1>\frac{1}{w_n}}{\text{limsup}}\sum_{\frac{1}{w_n}<j\leq n-1}w_n\cdot(w_n j)^2\cdot e^{\frac{-3w_n j}{4}}$$(with the last inequality following from the fact, implied by $w^{-1}_n$ being $O\big(\sqrt{n}\big)$, that for sufficiently large $n$ we have $\frac{1}{1-\frac{j}{n}}\leq e^{n^{-\frac{2}{3}}j}\leq e^{\frac{w_n j}{4}}$ for $1\leq j\leq n-1$).  Finally, comparing the sum inside the larger term in \eqref{divbli} to the integral of $x^2 e^{-\frac{3}{4}x}$ on $[\lceil{\frac{1}{w_n}}\rceil\cdot w_n,\ n\cdot w_n]$, we see that there must exist $K<\infty$ (independent of $n$) such that the sum is bounded above by $\int_0^{\infty}x^2 e^{\frac{-3}{4}x}dx+K$.  Combining this with $w_n^{-1}$ being $O(\sqrt{n})$ then implies that the bottom expression in \eqref{divbli} is finite which, coupled with \eqref{2ndtbnds} and \eqref{divbli}, now establishes that the second term on the right of the inequality in \eqref{logdiffslog} is finite.
   
   To complete the proof of (i), it now just needs to be shown that the first term on the right of the inequality in \eqref{logdiffslog} is finite as well.  Now because for any probability generating function of a non negative integer valued random variable with finite mean $\frac{f'(x)}{f(x)}$ is $O\big(\frac{1}{x}\big)$, this means there must exist a constant $C<\infty$ such that $\frac{f'(x)}{f(x)}\leq\frac{C}{x}\ \forall\ x\in\ (0,1]$, from which it follows that the term in question is bounded above by\begin{equation}\label{bnd2ndtis}\underset{n-1>\frac{1}{w_n}}{\text{limsup}}\sum_{j\leq\frac{1}{w_n}}\frac{C\cdot(w_{n-j}-w_n)\cdot j\cdot(1-w_n)^{j-1}}{1-(1-w_n)^j}\end{equation}Next noting that for $x\in(0,1]$ and $m\in\mathbb{Z}^+$ we have $\frac{1-(1-x)^m}{mx}=\frac{1}{m}\big(1+(1-x)+\dots +(1-x)^{m-1}\big)\geq(1-x)^{m-1}$, it follows that \eqref{bnd2ndtis} can be bounded above by$$C\cdot\underset{n-1>\frac{1}{w_n}}{\text{limsup}}\sum_{j\leq\frac{1}{w_n}}\frac{(w_{n-j}-w_n)}{w_n}=C\cdot\underset{n-1>\frac{1}{w_n}}{\text{limsup}}\sum_{j\leq\frac{1}{w_n}}\Bigg(\frac{1}{w_n}-\frac{1}{w_{n-j}}\Bigg)\cdot w_{n-j}$$On account of the concavity of $\frac{1}{w_n}$, this last expression can itself be bounded above by$$C\cdot\underset{n-1>\frac{1}{w_n}}{\text{limsup}}\sum_{j\leq\frac{1}{w_n}}\frac{j}{n}\cdot\frac{1}{w_n}\cdot\frac{n}{n-j}\cdot w_n=C\cdot\underset{n-1>\frac{1}{w_n}}{\text{limsup}}\sum_{j\leq\frac{1}{w_n}}\frac{j}{n-j}=\frac{C}{2}\cdot\underset{n-1>\frac{1}{w_n}}{\text{limsup}}\frac{1}{n\cdot w_n^2}<\infty$$(where the second equality along with the finiteness of the last term both follow from the fact that $w^{-1}_n$ is $O\big(\sqrt{n}\big)$).  Hence, this establishes that \eqref{bnd2ndtis}, as well as the first term to the right of the inequality in \eqref{logdiffslog}, is finite.  Now if this is combined with the finiteness of the second expression to the right of the inequality in \eqref{logdiffslog}, along with the inequality in \eqref{2ndlogdiff}, we see that \eqref{1stlogdiff} follows, thus completing the proof of (i).
   
   \medskip
   \noindent
   (ii) $\sum_{n=2}^{\infty}\prod_{j=1}^{n-1}f(1-e^{-w_n\cdot j})=\infty\iff\sum_{n=2}^{\infty}\prod_{j=1}^{n-1}f(1-(1-w_n)^j)=\infty$
   
   \medskip
   \noindent
   Because we know that$$1-w_n\leq e^{-w_n}\implies\sum_{n=2}^{\infty}\prod_{j=1}^{n-1}f(1-e^{-w_n\cdot j})\leq\sum_{n=2}^{\infty}\prod_{j=1}^{n-1}f(1-(1-w_n)^j)$$it follows that in order to establish (ii), it suffices to show (much like in the case of (i)) that\begin{equation}\label{logdiffwexp}\text{limsup}\sum_{j=1}^{n-1}\text{log}[f(1-(1-w_n)^j)]-\text{log}[f(1-e^{-w_n\cdot j})]<\infty\end{equation}Defining $C_n=\frac{e^{-w_n}-(1-w_n)}{w_n^2}$, we have the following string of inequalities (where the expression on the first line equals the expression in \eqref{logdiffwexp}, and with $S(n,j)$ representing the summand on the second line).\begin{equation}\label{strinequlogdiffe}\text{limsup}\sum_{j=1}^{n-1}\text{log}[f(1-e^{-w_n\cdot j}+\big((1-w_n+C_n w_n^2)^j-(1-w_n)^j\big)]-\text{log}[f(1-e^{-w_n\cdot j})]\end{equation}$$\leq\text{limsup}\sum_{j=1}^{n-1}\text{log}[f(1-e^{-w_n\cdot j}+\big((j\cdot C_n\cdot w_n^2)\wedge (1-w_n)\big)\cdot e^{-w_n(j-1)})]-\text{log}[f(1-e^{-w_n\cdot j})]$$ $$\leq\underset{n-1>\frac{1}{w_n}}{\text{limsup}}\sum_{j\leq\frac{1}{w_n}}S(n,j)+\underset{n-1>\frac{1}{w_n}}{\text{limsup}}\sum_{\frac{1}{w_n}<j\leq n-1}S(n,j)$$If we can show that both of the expressions on the last line of \eqref{strinequlogdiffe} are finite, then \eqref{logdiffwexp} will immediately follow.  Beginning with the first expression, observe that if we use the fact (referenced in the proof of (i)) that there must exist $C<\infty$ such that $\frac{f'(x)}{f(x)}\leq\frac{C}{x}\ \forall\ x\in(0,1]$, then we can obtain the string of inequalities\begin{equation}\label{strinequscj}\underset{n-1>\frac{1}{w_n}}{\text{limsup}}\sum_{j\leq\frac{1}{w_n}}S(n,j)\leq\underset{n-1>\frac{1}{w_n}}{\text{limsup}}\sum_{j\leq\frac{1}{w_n}}\frac{C\cdot C_n\cdot j\cdot w_n^2\cdot e^{-w_n(j-1)}}{1-e^{-w_n\cdot j}}\leq\frac{C}{2}\underset{n-1>\frac{1}{w_n}}{\text{limsup}}\sum_{j\leq\frac{1}{w_n}}\frac{j\cdot w_n^2}{1-e^{-w_n\cdot j}}\end{equation}(where the second inequality follows from the fact that $C_n\leq\frac{1}{2}\ \forall\ n$).  Now using the fact that$$1-e^{-w_n\cdot j}=\big(1-e^{-w_n}\big)\cdot\big(1+e^{-w_n}+\dots+\big(e^{-w_n}\big)^{j-1}\big)\geq j\cdot\big(1-e^{-w_n}\big)\cdot\big(e^{-w_n}\big)^{j-1}$$and that $\frac{1-e^{-w_n}}{w_n}\geq 1-e^{-1}$ (since $0<w_n<1\ \forall\ n)$, we find that the expression on the right in \eqref{strinequscj} is bounded above by$$\frac{C}{2(1-e^{-1})}\underset{n-1>\frac{1}{w_n}}{\text{limsup}}\sum_{j\leq\frac{1}{w_n}}\frac{j\cdot w_n^2}{j\cdot w_n\cdot e^{-1}}=\frac{C\cdot e}{2(1-e^{-1})}\underset{n-1>\frac{1}{w_n}}{\text{limsup}}\sum_{j\leq\frac{1}{w_n}}w_n\leq\frac{C\cdot e}{2(1-e^{-1})}<\infty$$thus establishing that the first sum on the last line of \eqref{strinequlogdiffe} is finite.
   
   In order to establish \eqref{logdiffwexp}, and thus complete the proof of (ii), it only remains to show that the second sum on the last line of \eqref{strinequlogdiffe} is finite as well.  We accomplish this via the following string of inequalities:$$\underset{n-1>\frac{1}{w_n}}{\text{limsup}}\sum_{\frac{1}{w_n}<j\leq n-1}S(n,j)\leq\frac{C}{2(1-e^{-1})}\underset{n-1>\frac{1}{w_n}}{\text{limsup}}\sum_{\frac{1}{w_n}<j<\infty}w_n\cdot(w_n j)\cdot e^{-w_n(j-1)}\leq\frac{C\cdot e}{2(1-e^{-1})}\int_0^{\infty}x\cdot e^{-x}dx +K$$(where the first inequality follows from the same argument used in \eqref{strinequscj}).  Hence, the proof of (ii) is complete.
   
   \medskip
   \noindent
   (iii) $\sum_{n=2}^{\infty}\prod_{j=1}^{\infty}f(1-e^{-w_n\cdot j})=\infty\iff\sum_{n=2}^{\infty}\prod_{j=1}^{n-1}f(1-e^{-w_n\cdot j})=\infty$
   
   \medskip
   \noindent
   Since one direction is immediate, we're left with just having to show that\begin{equation}\label{boundtailsl}\text{limsup}\sum_{j=n}^{\infty}-\text{log}[f(1-e^{-w_n\cdot j})]<\infty\end{equation}Observing that$$\text{limsup}\sum_{j=n}^{\infty}-\text{log}[f(1-e^{-w_n\cdot j})]=\text{limsup}\frac{1}{w_n}\sum_{j=n}^{\infty}-w_n\text{log}[f(1-e^{-w_n\cdot j})]$$ $$\leq\text{limsup}\frac{1}{w_n}\int_{(n-1)\cdot w_n}^{\infty}-\text{log}[f(1-e^{-x})]dx$$we find that, as a consequence of the fact that $f'(1)=q<\infty$ and $w^{-1}_n$ is $O\big(\sqrt{n}\big)$, we have$$\text{limsup}\frac{1}{w_n}\int_{(n-1)\cdot w_n}^{\infty}-\text{log}[f(1-e^{-x})]dx=\text{limsup}\frac{1}{w_n}\int_{(n-1)\cdot w_n}^{\infty}q\cdot e^{-x}dx=\text{limsup}\frac{1}{w_n}\cdot q\cdot e^{-(n-1)\cdot w_n}$$ $$\leq\text{limsup}\frac{\sqrt{n}}{\sqrt{l}}\cdot q\cdot e^{-\frac{n-1}{n}\cdot\sqrt{n}\cdot\sqrt{l}}=0$$(where $l$ denotes the value of $\text{liminf}\ n\cdot w_n^2$).  Hence, this establishes \eqref{boundtailsl}, thus completing the proof of (iii).
   
   \medskip
   \noindent
   (iv) $\sum_{n=1}^{\infty}\frac{e^{\frac{-\mathcal{K}}{w_n}}}{(w_n)^{d/2}}=\infty\iff\sum_{n=2}^{\infty}\prod_{j=1}^{\infty}f(1-e^{-w_n\cdot j})=\infty$
   
   \medskip
   \noindent
   Denoting $c_d=\Prob(X_1=d)$ (recall $d=\text{min}\left\{j:\Prob(X_1=j)>0\right\}$), observe that\begin{equation}\label{approxderivlogf}\frac{d(\text{log}[f(x)])}{dx}=\frac{f'(x)}{f(x)}=\frac{d c_d+(d+1)c_{d+1}x+\dots}{c_d x+c_{d+1}x^2+\dots}=\frac{d}{x}\cdot\frac{1+\frac{d+1}{d}\frac{c_{d+1}}{c_d}x+\dots}{1+\frac{c_{d+1}}{c_d}x+\dots}=\frac{d}{x}+O(1)\end{equation}Now we want to approximate\begin{equation}\label{splitdiffintsum}\frac{-\mathcal{K}}{w_n}-\text{log}\Big[\prod_{j=1}^{\infty}f(1-e^{-w_n\cdot j})\Big]=\frac{1}{w_n}\int_0^{\lfloor{\frac{1}{w_n}}\rfloor w_n}\text{log}[f(1-e^{-x})]dx-\frac{1}{w_n}\sum_{j=1}^{\lfloor{\frac{1}{w_n}}\rfloor}w_n\cdot\text{log}[f(1-e^{-w_n\cdot j})]\end{equation} $$+\frac{1}{w_n}\int_{\lfloor{\frac{1}{w_n}}\rfloor w_n}^{\infty}\text{log}[f(1-e^{-x})]dx-\frac{1}{w_n}\sum_{\lceil{\frac{1}{w_n}}\rceil}^{\infty}w_n\cdot\text{log}[f(1-e^{-w_n\cdot j})]$$within an order of $O(1)$.  First noting that the expression on the second line of \eqref{splitdiffintsum} is $O(1)$ as $n\rightarrow\infty$ (this follows from the fact that it is bounded above by $0$ and below by $\text{log}\Big[f\Big(1-e^{-w_n\cdot\lfloor{\frac{1}{w_n}\rfloor}}\Big)\Big]$), we see that our task is reduced to approximating\begin{equation}\label{diffsuminteqsumint}\frac{1}{w_n}\int_0^{\lfloor{\frac{1}{w_n}}\rfloor w_n}\text{log}[f(1-e^{-x})]dx-\frac{1}{w_n}\sum_{j=1}^{\lfloor{\frac{1}{w_n}}\rfloor}w_n\cdot\text{log}[f(1-e^{-w_n\cdot j})]\end{equation} $$=\frac{1}{w_n}\sum_{j=2}^{\lfloor{\frac{1}{w_n}}\rfloor}\int_0^{w_n}\text{log}[f(1-e^{-(w_n\cdot j-t)})]-\text{log}[f(1-e^{-w_n\cdot j})]dt+O(1)$$(where the $O(1)$ term represents $\frac{1}{w_n}\int_0^{w_n}\text{log}[f(1-e^{-x})]dx-\text{log}[f(1-e^{-w_n})]$).  Using \eqref{approxderivlogf}, we then find that the integrand in the bottom expression equals$$-\int_{1-e^{-(w_n\cdot j-t)}}^{1-e^{-w_n\cdot j}}\frac{d}{x}+O(1)dx=d\text{log}\Big[\frac{1-e^{-(w_n\cdot j-t)}}{1-e^{-w_n\cdot j}}\Big]+O(e^{-(w_n\cdot j-t)}-e^{-w_n\cdot j})=d\text{log}\Big[\frac{1-e^{-(w_n\cdot j-t)}}{1-e^{-w_n\cdot j}}\Big]+O(t)$$ $$=d\text{log}\Bigg[1+\frac{e^{-w_n\cdot j}(1-e^t)}{1-e^{-w_n\cdot j}}\Bigg]+O(t)=d\text{log}\Big[1-\frac{t}{w_n\cdot j}+O(t)\Big]+O(t)=d\text{log}\Big[1-\frac{t}{w_n\cdot j}\Big]+O(t)$$(with the final equality following from the fact that $n\geq 2\implies 1-\frac{t}{w_n\cdot j}\geq\frac{1}{2}>0\ \forall\ t$).  Plugging this back into the expression on the second line of \eqref{diffsuminteqsumint} now gives$$\frac{1}{w_n}\sum_{j=2}^{\lfloor{\frac{1}{w_n}}\rfloor}\int_0^{w_n}d\text{log}\Big[1-\frac{t}{w_n\cdot j}\Big]+O(t)dt=\frac{d}{w_n}\sum_{j=2}^{\lfloor{\frac{1}{w_n}}\rfloor}-(j-1)\cdot w_n\cdot\text{log}\Big[1-\frac{1}{j}\Big]-w_n+O(w_n^2)$$ $$=d\sum_{j=2}^{\lfloor{\frac{1}{w_n}}\rfloor}-(j-1)\cdot\text{log}\Big[1-\frac{1}{j}\Big]-1+O(w_n)=d\sum_{j=2}^{\lfloor{\frac{1}{w_n}}\rfloor}\frac{-1}{2j}+O\Big(\frac{1}{j^2}\Big)+O(w_n)=\frac{-d}{2}\text{log}\Big[\frac{1}{w_n}\Big]+O(1)$$(where the $O(t)$ expressions above are to be interpreted as meaning that the absolute value of the term in question is bounded above by $ct$ for some $c<\infty$, where $c$ is independent of $n$ as well as $t$).  Looking back now at the first line of \eqref{splitdiffintsum}, we find that$$\frac{-\mathcal{K}}{w_n}-\text{log}\Big[\prod_{j=1}^{\infty}f(1-e^{-w_n\cdot j})\Big]=\frac{-d}{2}\text{log}\Big[\frac{1}{w_n}\Big]+O(1)\implies C_1\cdot\frac{e^{\frac{-\mathcal{K}}{w_n}}}{(w_n)^{d/2}}\leq\prod_{j=1}^{\infty}f(1-e^{-w_n\cdot j})\leq C_2\cdot\frac{e^{\frac{-\mathcal{K}}{w_n}}}{(w_n)^{d/2}}$$(for some $C_1,C_2$ independent of $n$ with $0<C_1<C_2<\infty$), thus completing the proof of (iv).
   
   \bigskip
   \noindent
   Having now established \eqref{equivpzez} via (i)-(iv) when $a^{-1}_n$ is $O\big(\sqrt{n}\big)$, our final task is to address the general case.  To do this, we first note that because the proof of (iv) does not use that $a^{-1}_n$ is $O\big(\sqrt{n}\big)$, it follows that it continues to hold without this assumption.  Coupling this with \eqref{weqaeqratio}, along with the fact that$$\sum_{n=2}^{\infty}\prod_{j=1}^{\infty}f(1-e^{-w_n\cdot j})\leq\sum_{n=2}^{\infty}\prod_{j=1}^{n-1}f(1-e^{-w_n\cdot j})\leq\sum_{n=2}^{\infty}\prod_{j=1}^{n-1}f(1-(1-w_n)^j)\leq\sum_{n=2}^{\infty}\prod_{j=1}^{n-1}f(1-(1-w_{n-j})^j)$$we find that the implication going from left to right in \eqref{equivpzez} holds regardless of whether or not $a^{-1}_n$ is $O\big(\sqrt{n}\big)$.  Hence, to complete the proof of the theorem we simply need to show that when $a^{-1}_n$ is not $O\big(\sqrt{n}\big)$, finiteness of the expression on the left side of \eqref{equivpzez}, still implies finiteness of the expression on the right.
   
   If we define the sequence $\tilde{a}_n$ so that$$\frac{1}{\tilde{a}_n}=\left\{\begin{array}{ll}\frac{1}{a_n} &\text{if }\frac{1}{a_n}<3\sqrt{n}\\3\sqrt{n} &\text{otherwise}\end{array}\right.$$it then follows that $\frac{1}{\tilde{a}_n}$ is concave and $O\big(\sqrt{n}\big)$ (also note $\frac{1}{2}<\frac{1}{2}+\tilde{a}_n<1$ still holds).  In addition, since we're assuming that the expression on the left side of \eqref{equivpzez} is finite, this means$$\sum_{n=1}^{\infty}\frac{e^{\frac{-\mathcal{K}}{4\tilde{a}_n}}}{(\tilde{a}_n)^{d/2}}\leq\sum_{n=1}^{\infty}\frac{e^{\frac{-\mathcal{K}}{4a_n}}}{(a_n)^{d/2}}+\sum_{n=1}^{\infty}e^{\frac{-\mathcal{K}\cdot 3\sqrt{n}}{4}}\cdot 3^{d/2}\cdot n^{d/4}<\infty$$Hence, the proof of \eqref{equivpzez}, for the case where $a^{-1}_n$ is $O\big(\sqrt{n}\big)$, implies that$$\sum_{n=2}^{\infty}\prod_{j=1}^{n-1}f\Big(1-\Big(1-\frac{4\tilde{a}_{n-j}}{1+2\tilde{a}_{n-j}}\Big)^j\Big)<\infty$$Coupling this with the fact that $a_n\leq\tilde{a}_n$, we can now conclude that$$\sum_{n=2}^{\infty}\prod_{j=1}^{n-1}f\Big(1-\Big(1-\frac{4a_{n-j}}{1+2a_{n-j}}\Big)^j\Big)<\infty$$which, along with the argument in the previous paragraph, establishes that \eqref{equivpzez} continues to hold when $a^{-1}_n$ is not $O\big(\sqrt{n}\big)$.  Hence, the proof of the theorem is complete.
   \end{proof}
  
  \subsection{Sharp conditions for the Poiss($\lambda_j$) scenario} \label{ss:genJR}
  
  In this section we'll address the final model discussed in the introduction (see \cite{Rosenberg}), establishing sharp conditions for the case where the drift values of individual frogs are dependent on where they originate.  Our result is as follows.
  
  \begin{theorem}\label{theorem:neat4}
  For $X_j=\text{Poiss}(\lambda_j)$ and $p_j=\frac{1}{2}+a_j$ (with the sequences $\frac{1}{a_j}$ and $\lambda_j$ both being concave), the nonhomogeneous frog model on $\mathbb{Z}$ is transient if and only if\begin{equation}\label{JRmodeltc}\sum_{n=1}^{\infty}e^{-\lambda_n\big(\frac{1}{4a_n}-\frac{1}{2}\big)}=\infty\end{equation}
  \end{theorem}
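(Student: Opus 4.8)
The plan is to follow the template of the proof of Theorem~\ref{theorem:neat3}: first recast \eqref{JRmodeltc} in the form of the series in \eqref{generalform}, then prove the equivalence under the extra hypothesis $a_n^{-1}=O(\sqrt n)$, and finally drop that hypothesis via the same truncation trick used at the end of the proof of Theorem~\ref{theorem:neat3}. Since a $\mathrm{Poiss}(\lambda_j)$ random variable has generating function $f_j(s)=e^{\lambda_j(s-1)}$, if we set $w_j=\frac{4a_j}{1+2a_j}$ (so that $\frac{1-p_j}{p_j}=1-w_j$, and $w_j^{-1}$ is concave whenever $a_j^{-1}$ is), then Theorem~\ref{theorem:neat1} says the model is transient iff $\sum_{n\geq 2}\exp\!\big(-\sum_{j=1}^{n-1}\lambda_j(1-w_j)^{n-j}\big)=\infty$. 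Reindexing with $k=n-j$, write $S_n=\sum_{j=1}^{n-1}\lambda_j(1-w_j)^{n-j}=\sum_{k=1}^{n-1}\lambda_{n-k}(1-w_{n-k})^k$; and since $\frac{1}{4a_n}-\frac12=\frac{1}{w_n}-1=\sum_{k\geq 1}(1-w_n)^k$, the exponent in \eqref{JRmodeltc} equals $b_n:=\lambda_n\sum_{k\geq 1}(1-w_n)^k$. So the theorem reduces to showing that $\sum_n e^{-S_n}$ and $\sum_n e^{-b_n}$ converge or diverge together. One direction is free: as $\{\lambda_j\}$ is nondecreasing and $\{w_j\}$ nonincreasing, every summand of $S_n$ satisfies $\lambda_{n-k}(1-w_{n-k})^k\leq\lambda_n(1-w_n)^k$, so $S_n\leq b_n$, $e^{-S_n}\geq e^{-b_n}$, and $\sum_n e^{-b_n}=\infty$ implies transience.

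For the converse I will bound the nonnegative error $\Delta_n:=b_n-S_n$, splitting it as
\[
\Delta_n=\underbrace{\lambda_n\!\!\sum_{k\geq n}(1-w_n)^k}_{(A)}+\underbrace{\lambda_n\!\!\sum_{k=1}^{n-1}\!\big[(1-w_n)^k-(1-w_{n-k})^k\big]}_{(B)}+\underbrace{\sum_{k=1}^{n-1}(\lambda_n-\lambda_{n-k})(1-w_{n-k})^k}_{(C)},
\]
all three pieces nonnegative. Working first under $a_n^{-1}=O(\sqrt n)$ (hence $w_n^{-1}=O(\sqrt n)$, and $\lambda_n=O(n)$ by concavity), $(A)=O\big(\lambda_n w_n^{-1}e^{-nw_n}\big)=o(1)$. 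For $(B)$, the telescoping bound $(1-w_n)^k-(1-w_{n-k})^k\leq k(w_{n-k}-w_n)(1-w_n)^{k-1}$ combined with the consequence of concavity of $w_n^{-1}$ that $w_{n-k}-w_n\leq\frac{k}{n-k}w_n$ gives, after splitting at $k=n/2$, $(B)\leq C\frac{\lambda_n}{nw_n^2}+o(1)$. For $(C)$, concavity of $\{\lambda_j\}$ gives $\lambda_n-\lambda_{n-k}\leq k\delta_n$ for $k\leq n/2$, where $\delta_n=\lambda_{\lceil n/2\rceil+1}-\lambda_{\lceil n/2\rceil}\leq 3\lambda_n/n$, so $(C)\leq 3\frac{\lambda_n}{nw_n^2}+o(1)$ as well. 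Since $b_n=\lambda_n\frac{1-w_n}{w_n}\geq(1-w_1)\frac{\lambda_n}{w_n}$ and $w_n^{-1}=O(\sqrt n)$ force $\frac{\lambda_n}{nw_n^2}\leq\frac{C'}{\sqrt n}b_n$, we arrive at
\[
\Delta_n\leq\frac{C'}{\sqrt n}\,b_n+D
\]
for constants $C',D$ that do not depend on $n$.

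Given this, $\sum_n e^{-S_n}\leq e^{D}\sum_n e^{-b_n(1-C'/\sqrt n)}$, and the latter converges: for indices with $b_n\leq\sqrt n$ the factor $e^{C'b_n/\sqrt n}$ is bounded, so those terms are $\leq e^{C'}e^{-b_n}$; for indices with $b_n>\sqrt n$ and $n$ large enough that $C'/\sqrt n\leq\frac12$, those terms are $\leq e^{-b_n/2}\leq e^{-\sqrt n/2}$; both of the resulting subseries are summable. This proves the equivalence when $a_n^{-1}=O(\sqrt n)$. For the general case, suppose $\sum_n e^{-b_n}<\infty$ and define $\tilde a_n$ by $\tilde a_n^{-1}=\min(a_n^{-1},3\sqrt n)$; then $\tilde a_n^{-1}$ is concave and $O(\sqrt n)$, $\tilde a_n\geq a_n$, and $\sum_n\exp\!\big(-\lambda_n(\frac{1}{4\tilde a_n}-\frac12)\big)\leq\sum_n e^{-b_n}+\sum_n e^{-\lambda_1(\frac{3\sqrt n}{4}-\frac12)}<\infty$. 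The case already settled (applied to the parameters $\tilde a_n$ and the same $\lambda_n$) then gives $\sum_n\exp\!\big(-\sum_{j=1}^{n-1}\lambda_j(1-\tilde w_j)^{n-j}\big)<\infty$; and since $a_n\leq\tilde a_n$ forces $1-w_j\geq 1-\tilde w_j$, hence $S_n\geq\sum_{j=1}^{n-1}\lambda_j(1-\tilde w_j)^{n-j}$, we conclude $\sum_n e^{-S_n}<\infty$, which by Theorem~\ref{theorem:neat1} is exactly non-transience.

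The heart of the argument, and the step I expect to be hardest, is the core estimate, in particular obtaining $\Delta_n$ bounded in the \emph{relative} form $\Delta_n\leq\frac{C'}{\sqrt n}b_n+D$ rather than merely $\Delta_n=o(b_n)$. A uniform bound $\Delta_n=O(1)$ is false---it already fails when $\lambda_n$ and $w_n^{-1}$ both grow like powers of $n$---while a purely multiplicative bound $\Delta_n\leq(1-\epsilon)b_n$ is too weak to preserve convergence of $\sum_n e^{-b_n}$ when $b_n$ is only logarithmically large; the $1/\sqrt n$-relative bound is the correct strength, and extracting it is where the two concavity hypotheses genuinely enter, one ($a_n^{-1}$) to control the $w$-variation term $(B)$ and the other ($\lambda_n$) to control the $\lambda$-variation term $(C)$.
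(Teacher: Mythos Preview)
Your argument is correct and follows the paper's overall template (reduce via Theorem~\ref{theorem:neat1}, note one inequality is free, bound the exponent gap $\Delta_n$ under an auxiliary growth hypothesis, then truncate). The substantive difference is in how the auxiliary hypothesis and the error bound are paired. The paper imposes the stronger condition that \emph{both} $\lambda_n$ and $a_n^{-1}$ are $O(n^{1/3})$; under that, the quantity $\lambda_n/(nw_n^2)$ is $O(1)$, so the paper obtains a genuinely uniform bound $\limsup_n\Delta_n<\infty$ (your parenthetical remark that such a bound ``is false'' is only true under your weaker hypothesis, not in general). Correspondingly, at the end the paper must truncate \emph{both} sequences, replacing $\lambda_n$ by $\tilde\lambda_n=\min(\lambda_n,n^{1/3})$ and $a_n^{-1}$ by $\min(a_n^{-1},3n^{1/3})$, and checking that four subseries are finite. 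You instead impose only $a_n^{-1}=O(\sqrt n)$ and compensate with the relative estimate $\Delta_n\le (C'/\sqrt n)\,b_n+D$, followed by the $b_n\lessgtr\sqrt n$ split; this lets you truncate only $a_n^{-1}$ at the end. Each route has its price: the paper's endgame is cleaner (direct comparison of exponentials) but needs the double truncation, while yours avoids touching $\lambda_n$ at the cost of the slightly more delicate convergence argument. Both are sound.
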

  
  \begin{proof}
  Since $\text{Poiss}(\lambda_j)$ has generating function $e^{\lambda_j(x-1)}$, applying Theorem \ref{theorem:neat1} reduces our task to showing that$$\sum_{n=1}^{\infty}e^{-\lambda_n\big(\frac{1}{4a_n}-\frac{1}{2}\big)}=\infty\iff\sum_{n=2}^{\infty}e^{-\sum_{j=1}^{n-1}\lambda_{n-j}\big(1-\frac{4a_{n-j}}{1+2a_{n-j}}\big)^j}=\infty$$Noting also that$$\sum_{n=2}^{\infty}e^{-\lambda_n\big(\frac{1}{4a_n}-\frac{1}{2}\big)}=\sum_{n=2}^{\infty}e^{-\sum_{j=1}^{\infty}\lambda_n\big(1-\frac{4a_n}{1+2a_n}\big)^j}\leq\sum_{n=2}^{\infty}e^{-\sum_{j=1}^{n-1}\lambda_n\big(1-\frac{4a_n}{1+2a_n}\big)^j}\leq\sum_{n=2}^{\infty}e^{-\sum_{j=1}^{n-1}\lambda_{n-j}\big(1-\frac{4a_{n-j}}{1+2a_{n-j}}\big)^j}$$we see that it will in fact suffice to establish the implication\begin{equation}\label{impfJRmodel}\sum_{n=1}^{\infty}e^{-\lambda_n\big(\frac{1}{4a_n}-\frac{1}{2}\big)}<\infty\implies\sum_{n=2}^{\infty}e^{-\sum_{j=1}^{n-1}\lambda_{n-j}\big(1-\frac{4a_{n-j}}{1+2a_{n-j}}\big)^j}<\infty\end{equation}To do this we'll begin by proving \eqref{impfJRmodel} for the case where $\lambda_n$ and $a^{-1}_n$ are both $O\big(n^{1/3}\big)$.  Much like with the proof of Theorem \ref{theorem:neat3}, this will be accomplished by showing that\begin{equation}\label{bndlsJRmodel}\text{limsup}\ \lambda_n\Big(\frac{1}{4a_n}-\frac{1}{2}\Big)-\sum_{j=1}^{n-1}\lambda_{n-j}\Big(1-\frac{4a_{n-j}}{1+2a_{n-j}}\Big)^j<\infty\end{equation}
  
  As a first step towards establishing \eqref{bndlsJRmodel}, we note the following string of inequalities (with $\epsilon_j$ denoting $\frac{a_j}{1+2a_j}$).\begin{equation}\label{bndls1stJRm}\text{limsup}\sum_{j=1}^{n-1}\lambda_n(1-4\epsilon_{n-j})^j-\sum_{j=1}^{n-1}\lambda_{n-j}(1-4\epsilon_{n-j})^j=\text{limsup}\sum_{j=1}^{n-1}(\lambda_n-\lambda_{n-j})\cdot(1-4\epsilon_{n-j})^j\end{equation}$$\leq\text{limsup}\sum_{j=1}^{n-1}(\lambda_n-\lambda_{n-j})\cdot(1-4\epsilon_n)^j\leq\text{limsup}\sum_{j=1}^{n-1}\frac{j}{n}\cdot\lambda_n\cdot e^{-4j\epsilon_n}\leq\text{limsup}\frac{\lambda_n/\epsilon^2_n}{n}\sum_{j=1}^{\infty}\epsilon_n\cdot(\epsilon_n j)\cdot e^{-4j\epsilon_n}<\infty$$(where the inequality between the first two sums on the second line follows from the fact that $\lambda_j$ is concave and $(1-4\epsilon_n)^j\leq e^{-4j\epsilon_n}$, and the finiteness of the last expression derives from the fact that $\lambda_n$ and $\epsilon^{-1}_n$ are both $O\big(n^{1/3}\big)$, along with the fact that the sum is bounded above by $\int_0^{\infty}x e^{-4x}dx+K$ for some $K<\infty$).  Next, we present another string of inequalities as shown.\begin{equation}\label{bndls2ndJRm}\text{limsup}\sum_{j=1}^{n-1}\lambda_n(1-4\epsilon_n)^j-\sum_{j=1}^{n-1}\lambda_n(1-4\epsilon_{n-j})^j=\text{limsup}\lambda_n\sum_{j=1}^{n-1}(1-4\epsilon_n)^j-(1-4\epsilon_{n-j})^j\end{equation}$$\leq\text{limsup}\ 4\lambda_n\sum_{j=1}^{n-1}(\epsilon_{n-j}-\epsilon_n)\cdot j\cdot(1-4\epsilon_n)^{j-1}=\text{limsup}\ 4\lambda_n\sum_{j=1}^{n-1}(\epsilon^{-1}_n-\epsilon^{-1}_{n-j})\cdot\epsilon_n\epsilon_{n-j}\cdot j\cdot(1-4\epsilon_n)^{j-1}$$Because $\epsilon^{-1}_n$ is concave (since it equals $a^{-1}_n+2$), it follows that the expression on the second line of \eqref{bndls2ndJRm} is less than or equal to$$\text{limsup}\ 4\lambda_n\sum_{j=1}^{n-1}\frac{j}{n}\cdot\epsilon_{n-j}\cdot j\cdot(1-4\epsilon_n)^{j-1}\leq\text{limsup}\ 4\lambda_n\sum_{j=1}^{n-1}\epsilon_{n-j}\cdot\frac{j^2}{n}\cdot e^{-4(j-1)\epsilon_n}\leq\text{limsup}\ 4e\lambda_n\sum_{j=1}^{n-1}\frac{\epsilon_n}{1-\frac{j}{n}}\cdot\frac{j^2}{n}\cdot e^{-4j\epsilon_n}$$ $$=\text{limsup}\ \frac{4e\lambda_n/\epsilon^2_n}{n}\sum_{j=1}^{n-1}\frac{1}{1-\frac{j}{n}}\cdot(j\epsilon_n)^2\cdot e^{-4j\epsilon_n}\epsilon_n\leq\text{limsup}\ \frac{4e\lambda_n/\epsilon^2_n}{n}\sum_{j=1}^{\infty}\epsilon_n\cdot(\epsilon_n j)^2\cdot e^{-3j\epsilon_n}<\infty$$(where the inequality on the second line follows from the fact that for sufficiently large $n$ we have $\frac{1}{1-\frac{j}{n}}<e^{j\epsilon_n}$ for all $j$ with $1\leq j<n$, and where the finiteness of the last term follows from $\lambda_n$ and $\epsilon^{-1}_n$ both being $O\big(n^{1/3}\big)$, along with the fact that the sum is once again bounded above by $\int_0^{\infty}x^2 e^{-3x}dx+K$ for some $K<\infty$).  Combining this last string of inequalities with \eqref{bndls2ndJRm}, we see that\begin{equation}\label{comb2strgs}\text{limsup}\sum_{j=1}^{n-1}\lambda_n(1-4\epsilon_n)^j-\sum_{j=1}^{n-1}\lambda_n(1-4\epsilon_{n-j})^j<\infty\end{equation}Finally, we observe that\begin{equation}\label{bndls3rdJRm}\text{limsup}\ \lambda_n\Big(\frac{1}{4a_n}-\frac{1}{2}\Big)-\sum_{j=1}^{n-1}\lambda_n(1-4\epsilon_n)^j=\text{limsup}\sum_{j=1}^{\infty}\lambda_n(1-4\epsilon_n)^j-\sum_{j=1}^{n-1}\lambda_n(1-4\epsilon_n)^j\end{equation}$$=\text{limsup}\sum_{j=n}^{\infty}\lambda_n(1-4\epsilon_n)^j=\text{limsup}\lambda_n\cdot\frac{(1-4\epsilon_n)^n}{4\epsilon_n}\leq\text{limsup}\frac{\lambda_n}{4\epsilon_n}\cdot e^{-4n\epsilon_n}=0$$(where the last equality again follows from $\lambda_n$ and $\epsilon^{-1}_n$ both being $O\big(n^{1/3}\big)$).  Now putting \eqref{bndls1stJRm}, \eqref{comb2strgs}, and \eqref{bndls3rdJRm} together, we see that \eqref{bndlsJRmodel} (and therefore \eqref{impfJRmodel}) does indeed hold if $\lambda_n$ and $a^{-1}_n$ are $O\big(n^{1/3}\big)$.
  
  To complete the proof of the theorem, \eqref{impfJRmodel} now just needs to be proven for the general case (i.e. without the condition that $\lambda_n$ and $a^{-1}_n$ are $O\big(n^{1/3})$).  To do this we begin by defining $\tilde{\lambda}_n$ and $\tilde{a}_n$ as$$\tilde{\lambda}_n=\left\{\begin{array}{ll}\lambda_n &\text{if }\lambda_n< n^{1/3}\\ n^{1/3} &\text{otherwise}\end{array}\right.$$and$$\frac{1}{\tilde{a}_n}=\left\{\begin{array}{ll}\frac{1}{a_n} &\text{if }\frac{1}{a_n}<3n^{1/3}\\3n^{1/3} &\text{otherwise}\end{array}\right.$$(again the coefficient $3$ has been chosen so that $\frac{1}{2}<\frac{1}{2}+\tilde{a}_n<1\ \forall\ n$).  Now noting that$$\sum_{n=1}^{\infty}e^{-\tilde{\lambda}_n\big(\frac{1}{4\tilde{a}_n}-\frac{1}{2}\big)}\leq\sum_{n=1}^{\infty}e^{-n^{1/3}\big(\frac{3n^{1/3}}{4}-\frac{1}{2}\big)}+\sum_{n=1}^{\infty}e^{-n^{1/3}\big(\frac{1}{4a_n}-\frac{1}{2}\big)}+\sum_{n=1}^{\infty}e^{-\lambda_n\big(\frac{3n^{1/3}}{4}-\frac{1}{2}\big)}+\sum_{n=1}^{\infty}e^{-\lambda_n\big(\frac{1}{4a_n}-\frac{1}{2}\big)}<\infty$$(where the finiteness of the middle two sums on the right of the inequality follows from the fact that $a_n<\frac{1}{2}$ and $\lambda_n>0\ \forall\ n\geq 1$), it follows from the proof of \eqref{impfJRmodel} for the case where $\lambda_n$ and $a^{-1}_n$ are $O\big(n^{1/3}\big)$, that$$\sum_{n=2}^{\infty}e^{-\sum_{j=1}^{n-1}\lambda_{n-j}\big(1-\frac{4a_{n-j}}{1+2a_{n-j}}\big)^j}\leq\sum_{n=2}^{\infty}e^{-\sum_{j=1}^{n-1}\tilde{\lambda}_{n-j}\big(1-\frac{4\tilde{a}_{n-j}}{1+2\tilde{a}_{n-j}}\big)^j}<\infty$$(where the first inequality follows from the fact that $\tilde{\lambda_j}\leq\lambda_j$ and $\tilde{a}^{-1}_j\leq a^{-1}_j$).  Hence, this establishes \eqref{impfJRmodel} for the general case, and thus completes the proof of the theorem.
  \end{proof}

\section*{Acknowledgements}

The author would like to thank Toby Johnson for providing extensive background on the frog model.

\end{document}